\documentclass[11pt]{amsart}
\usepackage{amsmath,amsthm,amssymb}
\usepackage[hidelinks]{hyperref}
\newcommand{\ZZ}{\mathbb Z}
\newcommand{\QQ}{\mathbb Q}
\newcommand{\CC}{\mathbb C}
\newcommand{\FF}{\mathbb F}
\newcommand{\HH}{\mathbb H}
\newcommand{\tr}{\operatorname{tr}}
\newcommand{\diag}{\operatorname{diag}}
\newcommand{\Span}{\operatorname{Span}}
\newcommand{\Aut}{\operatorname{Aut}}
\newcommand{\Sp}{\operatorname{Sp}}
\newcommand{\GL}{\operatorname{GL}}
\newcommand{\thc}[2]{\theta\!\begin{bmatrix}#1\\#2\end{bmatrix}}
\theoremstyle{plain}
\newtheorem{thm}{Theorem}[section]
\newtheorem{prop}[thm]{Proposition}
\newtheorem{cor}[thm]{Corollary}

\theoremstyle{definition}
\newtheorem{rem}[thm]{Remark}
\newtheorem{exa}[thm]{Example}

\begin{document}
\title{Theta Series with  Half-integral Characteristics }

\author{R. Salvati Manni}
\address{Universit\`a di Roma ``La Sapienza'', Dipartimento di Matematica,
Piazzale Aldo Moro 2, I-00185 Roma, Italy}
\email{salvati@mat.uniroma1.it}
\date{September 24, 2026}
\begin{abstract}
We study theta series with half-integral characteristics attached to odd
positive-definite unimodular lattices. In even rank these identities involve two rational lattices
of determinant one; in odd rank they involve coset unions, equivalently
theta series of a rational lattice with finite-order characters. We give
the transformation multipliers.  In even rank, we get non-vanishing theta series with odd characteristics.
 We emphasize ranks divisible by four,
where the rational lattices are   integral unimodular neighbors. In rank
congruent to four modulo eight, the signed even-characteristic module
gives a dimension dichotomy and a criterion for its smaller constituent.
In rank divisible by eight, we recover the permutation representations,
independence results, and global neighbor-span identities. Examples  are significative,
  they include the Schottky's form, the $\Delta$ form   and an 
 explicit rank-twenty lattice isometric to both of its odd neighbors.
\end{abstract}
\maketitle
\section{Introduction}
Let $L$ be an odd positive-definite unimodular lattice of rank $m\ge1$,
and let $w$ be a characteristic vector. Write $W=(w,w)$, so that
$W\equiv m\pmod8$, and let $E=L_e$ be the even sublattice.
The theta series $\thc{\varepsilon}{\delta}(L,\tau)$, indexed by binary
characteristics, transform into one another with explicit roots of unity.
Retaining these multipliers allows us to work in arbitrary rank.

The main character-sum identities use the normalization
\[
 F_{\varepsilon,\delta}
 =e\!\left(-\frac W4\varepsilon^t\delta\right)
   \thc{\varepsilon}{\delta}(L,\tau),\qquad e(z)=\exp(2\pi iz).
\]
Put $a=w/2$ and
\[
 K_0=E\cup(E+a),\qquad K_1=E\cup(L_o+a),\qquad L_o=L\setminus E.
\]
For every rank and degree we prove
\[
 \sum_{\varepsilon,\delta}(-1)^{t\varepsilon^t\delta}
 F_{\varepsilon,\delta}=2^g\theta^{(g)}(K_t),\qquad t=0,1.
\]
Here the theta series of a coset union means the sum over its $g$-fold
Cartesian product. When $m$ is even, $K_0,K_1$ are rational lattices of
determinant one. They are integral exactly when $4\mid m$. When $m$ is
odd, they are not additive; we instead express their theta series through
characters of the rational lattice $E^*$. In this case all odd-characteristic
series vanish identically.

The cases $4\mid m$ retain a particularly direct lattice interpretation.
For $m\equiv0\pmod8$ the two neighbors are even, and their characteristic
series form ordinary permutation modules of $\Sp(2g,\FF_2)$. For
$m\equiv4\pmod8$ the neighbors are odd and the action is signed. We prove
in this latter case that the dimension of the even-characteristic span is
either
\[
 a_g=\frac{(2^g+1)(2^{g-1}+1)}3
 \quad\text{or}\quad N_g^+=2^{g-1}(2^g+1).
\]
The smaller dimension occurs precisely when
$\theta^{(g)}(K_0)+\theta^{(g)}(K_1)=2\theta^{(g)}(L)$.
In particular it occurs when all three lattices are isometric.

We retain the permutation-module and global-span results in ranks divisible
by eight, stating their rank assumptions locally. Theta series of cosets
and double characteristics are studied in \cite{BS}; related lattice
identities occur in \cite{DBMS}. We use the finite representation results
of \cite{Fra,FSM}, the standard theta transformation formalism
\cite{F}, and Kneser-neighbor theory \cite{Kn,Ch,CS}.

\section*{Acknowledgments}
The author used ChatGPT-6 Astra Light  in discussions,  preparation, 
 deep proofreading, producing  some examples of this manuscript and  for elaborating  the cases $m\not\equiv 0\,\,  {\rm mod} \,4$
in section 2. The mathematical content was independently verified by the author, who takes responsibility for the manuscript.

\section{The even sublattice and rational extensions}
All quadratic spaces and lattices below are positive definite. A
\emph{rational lattice} is a free abelian group equipped with a
positive-definite rational bilinear form. Its determinant is the
determinant of a Gram matrix. The term \emph{unimodular} is reserved for
integral lattices $L=L^*$, where
\[
 L^*=\{x\in L\otimes\QQ:(x,L)\subseteq\ZZ\}.
\]
A characteristic vector $w\in L$ satisfies
$(w,x)\equiv(x,x)\pmod2$ for every $x\in L$.
Such vectors form one class modulo $2L$, and satisfy the standard
congruence \cite{Se}
\begin{equation}\label{charcongruence}
 W=(w,w)\equiv m\pmod8.
\end{equation}
Since $L$ is odd, $w\notin2L$.

Put $E=L_e$, and write $L_o=v_0+E$ with $v_0\in L_o$. Then
$[L:E]=2$, $\det E=4$, and $a=w/2\in E^*\setminus L$.
The four classes of $E^*/E$ are represented by $0,v_0,a,v_0+a$.
For every rank define the two sets
\begin{equation}\label{rationalsets}
 K_0=E\cup(E+a),\qquad K_1=E\cup(E+v_0+a).
\end{equation}

\begin{prop}\label{classification}
If $m$ is even, $E^*/E\cong(\ZZ/2\ZZ)^2$, and $K_0,K_1$ are
rational lattices of determinant one, with pairwise intersection $E$
with each other and with $L$. They are integral if and only if $4\mid m$.
If $m$ is odd, $E^*/E\cong\ZZ/4\ZZ$ is generated by $a+E$,
$L/E$ is its unique subgroup of order two, and neither $K_t$ is a lattice.
\end{prop}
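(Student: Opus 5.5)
The plan is to determine the group structure of $E^*/E$ by computing $2a$ and $(a,a)$ modulo the relevant lattices, and then to read off closure and integrality of $K_t$ from the discriminant quadratic form.

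First we fix the basic facts. Since $E$ has index $2$ in $L=L^*$, we have $E^*\supset L\supset E$ with $[E^*:L]=2$ and $|E^*/E|=4$. The vector $a=w/2$ lies in $E^*$: for $x\in E$, $(w,x)\equiv(x,x)\equiv0\pmod 2$, so $(a,x)\in\ZZ$. It is not in $L$ because $w\notin 2L$. Hence the classes $0,v_0,a,v_0+a$ represent $E^*/E$.

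Next we decide whether $E^*/E$ is cyclic. Here $2a=w$, and $w\in E$ if and only if $W=(w,w)$ is even, that is, if and only if $m$ is even by (\ref{charcongruence}).
\begin{itemize}
\item If $m$ is odd, then $a+E$ has order $4$, so $E^*/E\cong\ZZ/4\ZZ$. Its unique order-two subgroup is generated by $2a+E=w+E=v_0+E$, which is $L/E$.
\item If $m$ is even, then $2a\in E$ and $2v_0\in E$, so every element has order at most $2$ and $E^*/E\cong(\ZZ/2\ZZ)^2$.
\end{itemize}

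For the lattice claims, $K_t$ is the preimage of a subset $\{0,c\}$ with $c=a$ or $c=v_0+a$.
\begin{itemize}
\item For $m$ even, $\{0,c\}$ is a subgroup because $2c\in E$. So $K_t$ is a lattice containing $E$ with index $2$, and $\det K_t=\det E/4=1$. The three order-two subgroups $L/E$, $K_0/E$ and $K_1/E$ are distinct, so any two of them intersect in $E$.
\item For $m$ odd, $2(v_0+a)\equiv 2a\not\equiv 0$ modulo $E$, so neither $\{0,a\}$ nor $\{0,v_0+a\}$ is a subgroup and neither $K_t$ is closed under addition.
\end{itemize}

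Integrality for $m$ even is the main computation. Since $(E,E)\subseteq\ZZ$, $(E,c)\subseteq\ZZ$ as $c\in E^*$, and $2c\in E$, the lattice $K_t$ is integral if and only if $(c,c)\in\ZZ$.
\begin{itemize}
\item For $c=a$: $(a,a)=W/4\equiv m/4\pmod 2$, which is an integer exactly when $4\mid m$.
\item For $c=v_0+a$: $(v_0+a,v_0+a)=(v_0,v_0)+(v_0,w)+W/4$. Because $w$ is characteristic, $(v_0,w)\equiv(v_0,v_0)$, which is odd. Therefore $(v_0,v_0)+(v_0,w)$ is even and $(c,c)\equiv W/4\pmod 1$, so the criterion is again $4\mid m$.
\end{itemize}

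The only delicate point is this last step: the characteristic-vector condition must be applied to the odd vector $v_0$ to make the two cosets behave identically. That sign bookkeeping is where I expect care to be needed.
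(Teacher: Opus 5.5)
Your proposal is correct and follows essentially the same route as the paper: the congruence $W\equiv m\pmod 8$ decides whether $2a=w$ lies in $E$, index two over $E$ gives determinant one, and integrality reduces to $(a,a)=W/4\in\ZZ$ together with $(v_0+a)^2=W/4+(v_0,v_0)+(w,v_0)\equiv W/4\pmod 2$. You also supply details the paper leaves implicit, namely that $0,v_0,a,v_0+a$ are four distinct classes and that the three order-two subgroups are distinct, which gives the pairwise intersections $E$.
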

\begin{proof}
We have $2a=w\in E$ precisely when $m$ is even, by
\eqref{charcongruence}. This proves the assertions about the discriminant
group and additivity. In even rank both extensions have index two over
$E$, hence determinant $4/2^2=1$. Since $(a,E)\subseteq\ZZ$,
integrality of $K_0$ is equivalent to $a^2=W/4\in\ZZ$. Moreover
\[
 (v_0+a)^2=W/4+(v_0,v_0)+(w,v_0)\equiv W/4\pmod{2\ZZ},
\]
so the same condition holds for $K_1$. In odd rank $2a+E=w+E=v_0+E$
is the nonzero class of $L/E$, so both remaining classes have order four.
\end{proof}

Thus the possibilities are
\[
\begin{array}{c|c|c}
 m\pmod8&E^*/E&K_0,K_1\\\hline
 0&(\ZZ/2\ZZ)^2&\text{even unimodular lattices}\\
 4&(\ZZ/2\ZZ)^2&\text{odd unimodular lattices}\\
 2,6&(\ZZ/2\ZZ)^2&\text{nonintegral rational lattices, determinant }1\\
 1,3,5,7&\ZZ/4\ZZ&\text{nonadditive unions of two cosets}
\end{array}
\]
For even $m$, their intersections have index two, so the neighbor
terminology extends to rational lattices; determinant one in this setting
does not assert integrality or self-duality.

\subsection{Integral neighbors when \texorpdfstring{$4\mid m$}{4 divides m}}
In these ranks we also use the customary notation
\begin{equation}\label{neighbors}
 L_1=K_0=E\cup(E+w/2),\qquad
 L_2=K_1=E\cup(L_o+w/2).
\end{equation}
The three lattices $L,L_1,L_2$ are the three integral unimodular
index-two extensions of $E$. For $m\equiv4\pmod8$ all three are odd
and have even sublattice $E$; for $8\mid m$ only $L$ is odd.
Changing $w$ to $w+2b$ preserves the ordering when $b\in E$ and
exchanges it when $b\in L_o$.

\begin{prop}\label{q1}
An odd unimodular lattice of rank divisible by eight has exactly two even
unimodular $2$-neighbors in its rational quadratic space, namely $L_1,L_2$.
\end{prop}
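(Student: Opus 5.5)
The plan is to show that any even unimodular $2$-neighbor $M$ of $L$ must contain $E$ and so must be one of the index-two extensions of $E$ listed in \eqref{neighbors}. By definition, a $2$-neighbor $M$ is a unimodular lattice in $L\otimes\QQ$ with $[L:L\cap M]=[M:L\cap M]=2$. Put $N=L\cap M$.

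\textbf{Step 1: $N=E$.} Since $M$ is even, $N\subseteq M$ is even, so $N\subseteq E$. Both $N$ and $E$ have index two in $L$, hence $N=E$. Conversely, $M$ is an index-two overlattice of $E$ that is integral, so $E\subseteq M\subseteq M^*\subseteq E^*$.

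\textbf{Step 2: $M$ is one of three lattices.} By Proposition~\ref{classification}, for $m$ even $E^*/E\cong(\ZZ/2\ZZ)^2$, with nonzero classes $v_0+E$, $a+E$ and $v_0+a+E$. The index-two overlattices of $E$ inside $E^*$ therefore correspond to the three order-two subgroups of this group, namely
\[
L=E\cup(E+v_0),\qquad K_0=E\cup(E+a),\qquad K_1=E\cup(E+v_0+a).
\]
Since $M\neq L$, we get $M\in\{K_0,K_1\}=\{L_1,L_2\}$.

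\textbf{Step 3: $L_1,L_2$ really are even unimodular neighbors.} By Proposition~\ref{classification}, $K_0$ and $K_1$ are integral of determinant one when $4\mid m$, so they are unimodular. Each meets $L$ in $E$, which has index two in both, so each is a $2$-neighbor of $L$. For evenness, the elements of $E$ have even norm. On $E+a$, the cross term $2(e,a)=(e,w)\equiv(e,e)\equiv0\pmod 2$ for $e\in E$, so
\[
(e+a)^2=(e,e)+2(e,a)+W/4\equiv W/4\pmod 2,
\]
and $W/4$ is even because $W\equiv m\equiv0\pmod 8$. On $E+v_0+a$, the computation in the proof of Proposition~\ref{classification} gives norms $\equiv W/4\pmod{2}$ as well. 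Hence both $L_1$ and $L_2$ are even. Finally, $L_1\neq L_2$ because $a$ and $v_0+a$ lie in different classes of $E^*/E$, so there are exactly two such neighbors.

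The only delicate point is Step 1, namely the observation that evenness of $M$ forces $L\cap M=E$. Everything else reduces to the structure of $E^*/E$ already recorded in Proposition~\ref{classification}. One should also note that the argument uses the definition of $2$-neighbor as intersection of index two on both sides; if "neighbor" is instead defined via $M=\ZZ x+L_x$ with $L_x=\{y\in L:(x,y)\in2\ZZ\}$, the intersection still has index two, so the same argument applies.
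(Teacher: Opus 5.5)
Your proof is correct and is the paper's argument: evenness of $M$ forces $L\cap M\subseteq E$, hence $L\cap M=E$ by index, so $M$ is one of the three index-two extensions of $E$ inside $E^*$ from Proposition~\ref{classification}, and of these only $L_1,L_2$ are even when $8\mid m$. Your Step~3 just writes out the parity check (norms $\equiv W/4\pmod 2$ on both nontrivial cosets) that the paper leaves implicit in citing that proposition.
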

\begin{proof}
The intersection of an even neighbor with $L$ is an index-two sublattice
contained in $E$, hence equals $E$. Proposition~\ref{classification}
lists its three index-two extensions and their parities.
\end{proof}

\begin{exa}\label{ex}
For $L=I_m=\ZZ^m$ take $w=(1,\ldots,1)^t$. Then $E=D_m$, where
\[
 D_m=\{x\in\ZZ^m:\textstyle\sum_i x_i\equiv0\pmod2\}.
\]
For even $m$, $K_0=D_m\cup(D_m+\tfrac12(1,\ldots,1))$, and
changing the sign of the first coordinate exchanges $K_0,K_1$.
For $4\mid m$ this is the lattice $D_m^+$.
\end{exa}

\begin{prop}\label{p2}
For $\phi\in\Aut(L)$, write $\phi(w)=w+2v_w$ with $v_w\in L$.
The class of $v_w$ modulo $E$ is independent of the characteristic vector.
\end{prop}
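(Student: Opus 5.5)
First I check that $v_w$ is well defined and lies in $L$. An automorphism $\phi$ preserves the form, so $\phi(w)$ is again characteristic: for every $x\in L$ we have $(\phi(w),x)=(w,\phi^{-1}x)\equiv(\phi^{-1}x,\phi^{-1}x)=(x,x)\pmod 2$. Characteristic vectors form a single class modulo $2L$, so $\phi(w)-w\in 2L$, and hence $v_w$ is a well-defined element of $L$.

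Next I compare two choices of characteristic vector. Any other one has the form $w'=w+2b$ with $b\in L$. Then
\[
 \phi(w')=\phi(w)+2\phi(b)=w+2v_w+2\phi(b)=w'+2\bigl(v_w+\phi(b)-b\bigr),
\]
so $v_{w'}=v_w+\phi(b)-b$. The claim therefore reduces to showing $\phi(b)-b\in E$ for every $b\in L$.

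The key step is this membership. Since $\phi$ is an isometry, $(\phi(b),\phi(b))=(b,b)$, so $\phi(b)$ and $b$ have the same parity of norm. The even sublattice $E=\{x\in L:(x,x)\equiv0\pmod 2\}$ is therefore $\phi$-stable, and the two cosets of $L/E$ are the parity classes. Hence $\phi(b)\equiv b\pmod E$. Equivalently, $(\phi(b)-b)^2=2(b,b)-2(\phi(b),b)$ is even.

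None of these steps is a real obstacle. The only point needing care is that $E$ is exactly the set of even-norm vectors; this follows from the identity $(x+y)^2=x^2+y^2+2(x,y)$, which makes the norm modulo $2$ additive on $L$. The argument also shows that the class of $v_w$ in $L/E$ depends only on $\phi$.
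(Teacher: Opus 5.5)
Your proof is correct and is essentially the paper's argument: you replace $w$ by $w+2b$, get $v_{w'}=v_w+\phi(b)-b$, and note that $(\phi(b)-b)^2=2(b,b)-2(\phi(b),b)$ is even, so $\phi(b)-b\in E$. Your extra check that $\phi(w)$ is again characteristic, so that $v_w\in L$ is well defined, is a welcome detail that the paper leaves implicit.
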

\begin{proof}
Replacing $w$ by $w+2x$ replaces $v_w$ by $v_w+\phi(x)-x$.
The last difference has norm $2(x,x)-2(\phi(x),x)$, which is even.
\end{proof}

\begin{prop}\label{p4}
Let $8\mid m$. If an isometry $\psi:L_1\to L_2$ satisfies
$\psi(E)=E$, its linear extension preserves $L$ and sends $w$ to
$w+2v$ for some $v\in L_o$.
\end{prop}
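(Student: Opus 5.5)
The plan is to work entirely inside the discriminant group $E^*/E$, using the linear extension of $\psi$ to $V=L\otimes\QQ$, still denoted $\psi$. Since $\psi(E)=E$ and $\psi$ is an isometry of $V$, it also preserves $E^*$. It therefore induces a permutation of the four classes $0,v_0,a,v_0+a$ of $E^*/E\cong(\ZZ/2\ZZ)^2$ from Proposition~\ref{classification}. In particular $\psi$ permutes the three index-two extensions of $E$ inside $E^*$, namely $L$, $L_1=K_0$ and $L_2=K_1$. Isometries preserve both integrality and parity of lattices, so the images are again unimodular with the same parity as the originals.

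The first step shows that $\psi(L)=L$. By Proposition~\ref{classification} and the discussion after \eqref{neighbors}, when $8\mid m$ the lattices $L_1,L_2$ are even and $L$ is the only odd one among the three extensions. The image $\psi(L)$ is an odd unimodular extension of $E$ by the previous paragraph, so it must equal $L$. Then $\psi|_L\in\Aut(L)$.

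The second step locates the image of $w$. An automorphism of $L$ sends characteristic vectors to characteristic vectors, because the condition $(w,x)\equiv(x,x)\pmod 2$ is invariant under isometries. Characteristic vectors form a single class modulo $2L$, so $\psi(w)=w+2v$ with $v\in L$, as in Proposition~\ref{p2}.

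The last step, which is the only point needing care, pins down the class of $v$ modulo $E$. The hypothesis $\psi(L_1)=L_2$ gives
\[
\psi(L_1\setminus E)=L_2\setminus E,
\]
that is, $\psi(E+a)=L_o+a$. Hence $\psi(a)=a+v$ lies in $L_o+a$, so $v=\psi(a)-a\in L_o$. This is consistent with $v$ being defined through $\psi(w)=2\psi(a)=w+2v$.

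I expect no real obstacle in this argument. The one thing to be careful about is the parity bookkeeping that forces $\psi(L)=L$, and this is exactly where the hypothesis $8\mid m$ is used. For $m\equiv4\pmod8$ all three extensions are odd, and the argument of the first step would break down.
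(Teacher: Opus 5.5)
Your proof is correct and is the paper's argument with the details filled in. The induced action on $E^*/E$ must fix the unique odd-norm class $v_0+E$, so $\psi(L)=L$, and $\psi(L_1)=L_2$ carries $a+E$ to $a+v_0+E$, which gives $v=\psi(a)-a\in L_o$. Your middle step via characteristic vectors and Proposition~\ref{p2} is harmless but redundant, since the last step already shows $v\in L_o\subset L$.
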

\begin{proof}
On $E^*/E$ it fixes the unique odd-norm class and exchanges the other
two nonzero classes. Thus it preserves $L$, and exchanges $a+E$ and
$a+v_0+E$, which gives the assertion about $v$.
\end{proof}
For $m\equiv4\pmod8$, the three nonzero classes all have odd norm;
the same argument does not apply. In either case an automorphism of
$L$ exchanging the two specified neighbors is sufficient for the
vanishing criterion below. An abstract isometry $L_1\cong L_2$ does
not supply such an automorphism by this argument.

\section{Characteristics and transformation multipliers}
There is no rank restriction in this section. For $X=(x_1,\ldots,x_g)$
write $(X,X)=((x_i,x_j))_{ij}$, and put
\begin{equation}\label{definition}
 \thc{\varepsilon}{\delta}(L,\tau;w)
 =\sum_{\substack{x_j\in L+\varepsilon_jw/2\\1\le j\le g}}
 e\!\left(\frac12\tr((X,X)\tau)
                +\frac12\sum_j\delta_j(w,x_j)\right),
 \,.
\end{equation}
Here $\tau\in\HH_g$.
We first allow integral $\varepsilon,\delta$. Positive definiteness gives
normal convergence. For $u,v\in\ZZ^g$, reindexing gives
\begin{equation}\label{reduction}
 \thc{\varepsilon+2u}{\delta}=\thc{\varepsilon}{\delta},\qquad
 \thc{\varepsilon}{\delta+2v}
   =(-1)^{W\varepsilon^tv}\thc{\varepsilon}{\delta}.
\end{equation}
In odd rank this sign must be retained. Unless stated otherwise we use
the fixed representatives $\varepsilon,\delta\in\{0,1\}^g$.
A characteristic is even or odd according to $\varepsilon^t\delta\pmod2$.

\begin{prop}\label{modularity}
For $T=T^t\in\ZZ^{g,g}$, $d=\diag T$, and $A\in\GL(g,\ZZ)$,
the transformation formulas are
\begin{align}
 \thc{\varepsilon}{\delta}(L,\tau+T)
 &=e\!\left(-\frac W8\varepsilon^tT\varepsilon
             -\frac W4\varepsilon^td\right)
   \thc{\varepsilon}{\delta+T\varepsilon+d}(L,\tau),\label{translation}\\
 \thc{\varepsilon}{\delta}(L,A\tau A^t)
 &=\thc{A^t\varepsilon}{A^{-1}\delta}(L,\tau),\label{linear}\\
 \thc{\varepsilon}{\delta}(L,-\tau^{-1})
 &=\det(\tau/i)^{m/2}e\!\left(\frac W4\varepsilon^t\delta\right)
   \thc{\delta}{-\varepsilon}(L,\tau).\label{inversion}
\end{align}
The determinant power in \eqref{inversion} is the holomorphic branch
equal to one at $\tau=iI_g$. Characteristics on the right are integral
and are reduced using \eqref{reduction}.
\end{prop}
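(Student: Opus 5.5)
We prove the three formulas separately, each by direct manipulation of the series \eqref{definition}. Write $x_j=y_j+\varepsilon_j a$ with $y_j\in L$ and $a=w/2$, so that every summation variable lies in $L+\varepsilon_j a$. Two facts will be used throughout. For $x\in L+\varepsilon a$ we have
\[
 (x,x)=(y,y)+\varepsilon(w,y)+\varepsilon^2W/4,
\]
and $(y,y)\equiv(w,y)\pmod 2$ because $w$ is characteristic.

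\textbf{Translation \eqref{translation}.} Replacing $\tau$ by $\tau+T$ multiplies each term by
\[
 e\!\left(\tfrac12\tr((X,X)T)\right)=e\!\left(\sum_{i<j}T_{ij}(x_i,x_j)+\tfrac12\sum_j d_j(x_j,x_j)\right).
\]
We expand both kinds of term modulo $\ZZ$, up to the constant that depends only on $\varepsilon$.

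For the off-diagonal terms,
\[
 (x_i,x_j)=(y_i,y_j)+\varepsilon_i(a,y_j)+\varepsilon_j(a,y_i)+\varepsilon_i\varepsilon_jW/4,
\]
and $(y_i,y_j)\in\ZZ$.

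For the diagonal terms, the characteristic property gives
\[
 \tfrac12(y_j,y_j)\equiv \tfrac12(w,y_j)\pmod\ZZ,
\]
so
\[
 \tfrac12(x_j,x_j)\equiv \tfrac12(1+\varepsilon_j)(w,y_j)+\varepsilon_jW/8 .
\]
We then rewrite each $(w,y_j)$ as $(w,x_j)-\varepsilon_jW/2$.

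Collecting the linear parts, they combine into $\tfrac12\sum_j(T\varepsilon+d)_j(w,x_j)$. Here we use $\varepsilon_j^2=\varepsilon_j$ and $T\varepsilon$ has entries $\sum_i T_{ij}\varepsilon_i$. This shifts $\delta$ to $\delta+T\varepsilon+d$. What remains is a constant. The main bookkeeping obstacle is to check that it equals exactly $-\tfrac W8\varepsilon^tT\varepsilon-\tfrac W4\varepsilon^td$ modulo $\ZZ$. This requires $\varepsilon\in\{0,1\}^g$, or else keeping track through \eqref{reduction}. No congruence on $W$ may be used, since the rank is arbitrary. I would carry out this computation carefully for a single $j$ and for a single pair $i<j$ and then sum.

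\textbf{Linear change \eqref{linear}.} This is the substitution $X\mapsto XA$, that is, $x'_k=\sum_j x_jA_{jk}$. It maps $\prod_j(L+\varepsilon_ja)$ bijectively onto $\prod_k(L+(A^t\varepsilon)_ka)$. Under it, $\tr((X,X)A\tau A^t)=\tr((X',X')\tau)$, and $\sum_j\delta_j(w,x_j)=\sum_k(A^{-1}\delta)_k(w,x'_k)$. Because the right-hand side is then reduced with \eqref{reduction}, the sign in \eqref{reduction} is absorbed correctly. No extra constant appears, since we keep the integral characteristic $A^t\varepsilon$ unreduced until the end.

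\textbf{Inversion \eqref{inversion}.} We apply Poisson summation on $L^g\otimes\mathbb R$. The series is $\sum_{Y\in L^g}f(Y+\varepsilon a)$ with
\[
 f(X)=e\!\left(\tfrac12\tr((X,X)\tau)+\tfrac12\sum_j\delta_j(w,x_j)\right).
\]
Since $L=L^*$, Poisson summation gives
\[
 \sum_{Z\in L^g}\hat f(Z)\,e\!\left(\sum_j\varepsilon_j(a,z_j)\right).
\]
The Gaussian Fourier transform produces $\det(\tau/i)^{-m/2}$ times a term with $-\tau^{-1}$ and with $z_j$ shifted by $-\delta_ja$. We apply this at $-\tau^{-1}$ instead of $\tau$, which gives the factor $\det(\tau/i)^{m/2}$. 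Both sides are holomorphic in $\tau$, so it suffices to fix the branch on $\tau=iY$, $Y$ real positive definite, where it is the standard computation; the formula then extends by continuity. After the substitution $z_j=u_j-\delta_j a$, the shifted lattice becomes $\prod(L+\delta_j a)$, which carries the top characteristic $\delta$. The character becomes $e\left(\sum_j\varepsilon_j(a,u_j)\right)$ times the constant $e\left(-\sum_j\varepsilon_j\delta_j(a,a)\right)$, where $(a,a)=W/4$. The character $e\left(\sum_j\varepsilon_j(a,u_j)\right)$ is the bottom characteristic $\varepsilon$ up to sign, giving $\thc{\delta}{\pm\varepsilon}$. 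Matching the sign convention to $-\varepsilon$, and the constant to $e(\tfrac W4\varepsilon^t\delta)$, is the second delicate point. I expect it to be resolved by writing the Fourier variable with the opposite sign and then applying \eqref{reduction}. Odd rank causes no trouble here, because the half-integral power is handled purely through the chosen branch.
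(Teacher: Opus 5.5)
Your proposal follows the paper's own route: the characteristic congruence for \eqref{translation}, the substitution $X\mapsto XA$ for \eqref{linear}, and Poisson summation with $L^*=L$ for \eqref{inversion}. The translation bookkeeping closes as you set it up. The constant is $\frac W8\varepsilon^tT\varepsilon-\frac W4(\varepsilon^tT\varepsilon+\varepsilon^td)$, and this holds for every integral $\varepsilon$, so the restriction to $\{0,1\}^g$ is not needed.

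The one slip is in the inversion step, where you mix two Fourier sign conventions. With the dual sum weighted by $e\bigl(\sum_j\varepsilon_j(a,z_j)\bigr)$, the Gaussian involves $z_j-\delta_ja$, so the substitution must be $z_j=u_j+\delta_ja$, not $z_j=u_j-\delta_ja$. This gives $\det(\tau/i)^{m/2}e(\frac W4\varepsilon^t\delta)\thc{-\delta}{\varepsilon}(\tau)$ at once. The substitution $U\mapsto-U$ then turns this into $\thc{\delta}{-\varepsilon}(\tau)$. So your ``second delicate point'' disappears, and \eqref{reduction} is not needed there.
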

\begin{proof}
Write $x_j=v_j+\varepsilon_jw/2$. In a translation, the terms depending
on $V=(v_j)$ in the extra exponent are, modulo integers,
$\frac12\sum_jd_j(w,v_j)+\frac12\sum_j(T\varepsilon)_j(w,v_j)$.
The remaining quadratic constant is $W\varepsilon^tT\varepsilon/8$.
Comparing with the new linear character gives \eqref{translation}.
Changing variables $X\mapsto XA$ gives \eqref{linear}.
Poisson summation, with $L^*=L$, gives \eqref{inversion}: the Fourier
transform interchanges the shift and linear character, and its Gaussian
factor is the stated determinant power. See also \cite{F}.
\end{proof}

Put $\Gamma_g=\Sp(2g,\ZZ)$. We use the affine action
\begin{equation}\label{action}
 M\circ\begin{bmatrix}\varepsilon\\\delta\end{bmatrix}
 =\begin{bmatrix}D&-C\\-B&A\end{bmatrix}
  \begin{bmatrix}\varepsilon\\\delta\end{bmatrix}
 +\begin{bmatrix}\diag(CD^t)\\\diag(AB^t)\end{bmatrix}\pmod2,
 \quad M=\begin{pmatrix}A&B\\C&D\end{pmatrix}.
\end{equation}
For even $m$, let $k=m/2$ and use
$f|_kM=\det(C\tau+D)^{-k}f(M\tau)$. In odd rank use the metaplectic
cover: for a lift $\widetilde M=(M,\varphi)$ with
$\varphi(\tau)^2=\det(C\tau+D)$, put
$f|_{m/2}\widetilde M=\varphi^{-m}f(M\tau)$.
The generator formulas give, in both cases,
\begin{equation}\label{transformation}
 \theta[c]|_{m/2}\widetilde M
   =\chi_{m,w}(\widetilde M,c)\,
     \theta[M^{-1}\circ c],
\end{equation}
with a root of unity $\chi_{m,w}$; in even rank the lift is unnecessary.
Equations \eqref{translation}--\eqref{inversion} and \eqref{reduction}
specify the multipliers, including their representative conventions.
They depend on $W\pmod8$, hence on $m\pmod8$, and on the lift in
half-integral weight. The diagonal reweighting used below conjugates
these monomial transformation matrices.
For a characteristic $c$ whose series is nonzero, restrict to its affine
stabilizer in $\Gamma_g$ (or its metaplectic inverse image). The slash
composition law makes $\chi_{m,w}(\,\cdot\,,c)$ a finite-order character
of this stabilizer. Thus each nonzero component is a scalar theta form
of weight $m/2$ with that character; the full family has the monomial
transformation law \eqref{transformation}.

The affine action preserves parity and is transitive on each parity
class. For $8\mid m$ every multiplier under the integral-weight slash
action is one. For $m\equiv4\pmod8$ it is a sign: translation contributes
$(-1)^{\varepsilon^tT\varepsilon}$, a linear substitution contributes
one, and full inversion contributes $(-1)^g$.
For the other congruence classes the phases in the displayed formulas
cannot in general be suppressed.

\begin{rem}\label{representative}
For $w'=w+2b$ one has
\[
 \thc{\varepsilon}{\delta}(L,\tau;w')
 =(-1)^{(b,w)\varepsilon^t\delta}
   \thc{\varepsilon}{\delta}(L,\tau;w).
\]
Indeed, the shifted summation sets are unchanged, and the change in the
linear character has that constant value on each such set.
Thus the even series are independent of this choice and the odd series
change by a common sign.
\end{rem}

\begin{cor}\label{s4}
Every even-characteristic series is nonzero. In a fixed degree the odd
series are either all zero or all nonzero. If  $L$ has
  odd  rank they are all zero.
\end{cor}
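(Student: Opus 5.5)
Three claims are involved. The plan is to handle the first two with the transformation law \eqref{transformation} and transitivity of the affine action on each parity class, and the third with a symmetry $x\mapsto -x$ of the summation set.

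\emph{Even characteristics are nonzero.} Start with $c=\begin{bmatrix}0\\0\end{bmatrix}$. Here $\thc{0}{0}(L,\tau)=\sum_{X\in L^g}e(\frac12\tr((X,X)\tau))$. Its Fourier expansion in $e(\tau)$ has constant term $1$, coming from $X=0$ only, by positive definiteness. Hence this series is not identically zero. For an arbitrary even $c$, choose $M\in\Gamma_g$ with $M^{-1}\circ\begin{bmatrix}0\\0\end{bmatrix}=c$; this is possible because the affine action is transitive on even characteristics. Then \eqref{transformation} gives
\[
\theta[0]|_{m/2}\widetilde M=\chi\,\theta[c]
\]
with $\chi$ a root of unity. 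The left side is nonzero, since slashing is invertible and $\varphi^{-m}$ never vanishes. Therefore $\theta[c]\neq0$.

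\emph{Odd characteristics: all or nothing.} The same argument applies. Given odd $c,c'$, pick $M$ with $M^{-1}\circ c=c'$, using transitivity on the odd class. Then $\theta[c']$ is a nonzero scalar multiple of $\theta[c]|\widetilde M$, so one vanishes identically exactly when the other does. By Remark~\ref{representative}, changing $w$ multiplies all odd series by a common sign, so the statement does not depend on the choice of $w$. It also does not matter whether $\varepsilon,\delta$ are reduced via \eqref{reduction}, since that only introduces signs.

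\emph{Odd rank: all odd series vanish.} By the previous step it suffices to show that a single odd series vanishes, but the direct argument works for every odd $c$, and I expect this to be the only step needing care. Apply $X\mapsto -X$. Each coset $L+\varepsilon_jw/2$ is sent to $L-\varepsilon_jw/2=L+\varepsilon_jw/2-\varepsilon_jw$, which is the same coset because $w\in L$. The quadratic term is unchanged. The linear term changes by
\[
e\!\left(-\sum_j\delta_j(w,x_j)\right).
\]
Write $x_j=v_j+\varepsilon_jw/2$ with $v_j\in L$. Then $(w,x_j)=(w,v_j)+\varepsilon_jW/2$, and $(w,v_j)\in\ZZ$. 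So the factor equals
\[
e\!\left(-\frac W2\,\varepsilon^t\delta\right)=(-1)^{W\varepsilon^t\delta}.
\]
For odd $m$ we have $W$ odd by \eqref{charcongruence}, and $\varepsilon^t\delta$ is odd because $c$ is odd. Thus the factor is $-1$, so $\theta[c]=-\theta[c]$ and $\theta[c]=0$.

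The same computation explains why even rank yields no vanishing here: for even $m$ the factor is $+1$, consistent with the abstract's claim of nonvanishing odd series in even rank.
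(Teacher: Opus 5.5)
Your proposal is correct and follows the paper's own proof. Transitivity of the affine action, the nonvanishing multipliers in \eqref{transformation}, and the constant coefficient one of $\theta[0,0]$ give the first two assertions. The substitution $X\mapsto -X$, yielding $\theta[c]=(-1)^{W\varepsilon^t\delta}\theta[c]$, gives the third, and you usefully spell out the coset and pairing computation that the paper leaves implicit.

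Your closing remark only shows that this argument forces no vanishing in even rank, not that odd series are nonzero there. Indeed they vanish in even rank whenever, for instance, $L$ represents one (Proposition~\ref{p5}).
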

\begin{proof}
Transitivity and the nonzero multipliers prove the first two assertions,
since $\theta[0,0]$ has constant coefficient one. Replacing every $x_j$
by $-x_j$ in \eqref{definition} gives
$\theta[\varepsilon,\delta]=(-1)^{W\varepsilon^t\delta}
\theta[\varepsilon,\delta]$, proving the last assertion.
\end{proof}
 Similarly 
 \begin{cor}\label{s4}
 $$L= L_1\perp L_2\perp\dots\perp L_k$$  with  at least  a $L_i$ of odd  rank , the odd
series  are all zero.
\end{cor}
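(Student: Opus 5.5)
The plan is to reduce to the odd-rank case of Corollary~\ref{s4}, using the fact that the theta series of an orthogonal sum factorizes. I would also give a direct sign argument that avoids the factorization.

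\emph{Splitting the characteristic vector.} Since $L$ is unimodular and the sum is orthogonal, each summand $L_i$ is unimodular. A characteristic vector of $L$ decomposes as $w=\sum_i w_i$ with $w_i\in L_i$. Testing $(w,x)\equiv(x,x)\pmod 2$ on $x\in L_i$ shows that each $w_i$ is characteristic for $L_i$. By Remark~\ref{representative}, changing $w$ within its class modulo $2L$ changes the odd series only by a common sign, so vanishing does not depend on this choice. By \eqref{charcongruence}, $W_i=(w_i,w_i)\equiv \operatorname{rank}L_i\pmod 8$. In particular $W_{i_0}$ is odd for a summand $L_{i_0}$ of odd rank.

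\emph{Factorization.} Write each $x_j\in L+\varepsilon_j w/2$ uniquely as $x_j=\sum_i x_{j,i}$ with $x_{j,i}\in L_i+\varepsilon_j w_i/2$. Both terms of the exponent in \eqref{definition} split over $i$:
\[
(X,X)=\sum_i (X_i,X_i),\qquad (w,x_j)=\sum_i (w_i,x_{j,i}).
\]
Absolute convergence therefore gives
\[
\thc{\varepsilon}{\delta}(L,\tau;w)=\prod_i \thc{\varepsilon}{\delta}(L_i,\tau;w_i).
\]
For an odd characteristic, the factor indexed by $i_0$ vanishes by the last assertion of Corollary~\ref{s4} applied to $L_{i_0}$. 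Hence the product vanishes.

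\emph{Direct argument.} Alternatively, apply the substitution used in the proof of Corollary~\ref{s4} only on the $i_0$-component: replace $x_{j,i_0}$ by $-x_{j,i_0}$ for every $j$. This preserves the summation sets, since $-(L_{i_0}+\varepsilon_j w_{i_0}/2)=L_{i_0}+\varepsilon_j w_{i_0}/2-\varepsilon_j w_{i_0}$. It also preserves the quadratic term. The linear term changes by $-\sum_j\delta_j(w_{i_0},x_{j,i_0})$. Since $(w_{i_0},x_{j,i_0})\equiv \varepsilon_j W_{i_0}/2\pmod 1$, the character changes by the factor $(-1)^{W_{i_0}\varepsilon^t\delta}=-1$ for an odd characteristic. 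So the series equals its own negative and vanishes.

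The only step needing care is the congruence $(w_{i_0},x)\equiv \varepsilon_j W_{i_0}/2 \pmod 1$ on the shifted coset. It holds because $(w_{i_0},L_{i_0})\subseteq\ZZ$.
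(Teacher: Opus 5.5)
Your proof is correct and follows the paper's route: you split $w=\sum_i w_i$ into characteristic vectors of the summands, factor the theta series as a product over the summands, and kill the odd-rank factor using the odd-rank case of the preceding corollary. Your alternative sign argument is also correct; it is Proposition~\ref{p5} applied to the automorphism acting as $-1$ on $L_{i_0}$ and as the identity on the other summands, which sends $w$ to $w+2v$ with $v=-w_{i_0}$ of odd norm $W_{i_0}$, so $v\in L_o$.
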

\
\begin{proof} We apply the fact that
 \(L=L_1\perp L_2\) and \(w=w_1+w_2\), where \(w_i\) is characteristic for \(L_i\) gives
\[
\theta\!\begin{bmatrix}\varepsilon\\ \delta\end{bmatrix}(L,\tau;w)
=
\theta\!\begin{bmatrix}\varepsilon\\ \delta\end{bmatrix}(L_1,\tau;w_1)\,
\theta\!\begin{bmatrix}\varepsilon\\ \delta\end{bmatrix}(L_2,\tau;w_2).
\]
\end{proof}

\begin{prop}\label{p5}
If $\phi\in\Aut(L)$ satisfies $\phi(w)=w+2v$ with $v\in L_o$, all
odd-characteristic series vanish in every degree. This applies, in
particular, whenever $L$ represents one.
\end{prop}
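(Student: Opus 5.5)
The plan is to use $\phi$ to change variables in the defining sum \eqref{definition} and combine this with Remark~\ref{representative}, so that each odd series equals its own negative.

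First we check that $\phi$ preserves every summation set. Since $\phi(w)=w+2v$, we have $\phi(w/2)=w/2+v$ with $v\in L$. Hence $\phi$ maps each coset $L+\varepsilon_jw/2$ onto itself, and $X\mapsto\phi(X)$ is a bijection of the summation domain preserving $(X,X)$. The linear character becomes
\[
 (w,\phi(x_j))=(\phi^{-1}(w),x_j).
\]
Now $\phi^{-1}(w)$ is again characteristic, because $\phi^{-1}$ is an isometry of $L$. Applying $\phi^{-1}$ to $\phi(w)=w+2v$ gives $w=\phi^{-1}(w)+2\phi^{-1}(v)$, so
\[
 \phi^{-1}(w)=w-2\phi^{-1}(v).
\]
The norm of $\phi^{-1}(v)$ equals that of $v$, which is odd, so $\phi^{-1}(v)\in L_o$.

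The substitution therefore gives
\[
 \thc{\varepsilon}{\delta}(L,\tau;w)=\thc{\varepsilon}{\delta}(L,\tau;w'),\qquad w'=w+2b,\ b=-\phi^{-1}(v)\in L_o.
\]
Strictly, the summation sets in \eqref{definition} are written with $w/2$. As in the remark, $L+\varepsilon_jw'/2=L+\varepsilon_jw/2$, so the shifted sets agree. By Remark~\ref{representative} the right side equals $(-1)^{(b,w)\varepsilon^t\delta}\thc{\varepsilon}{\delta}(L,\tau;w)$. Since $w$ is characteristic, $(b,w)\equiv(b,b)\equiv1\pmod2$. For odd $\varepsilon^t\delta$ this yields $\theta=-\theta$, so $\theta=0$. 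This holds in every degree $g$, since nothing depended on $g$.

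For the last assertion, suppose $L$ contains $u$ with $(u,u)=1$, and let $\phi$ be the reflection $x\mapsto x-2(x,u)u$. This is an automorphism of $L$ because $(u,u)=1$ and $L$ is integral. Then
\[
 \phi(w)=w-2(w,u)u=w+2v,\qquad v=-(w,u)u.
\]
Since $(w,u)\equiv(u,u)=1\pmod2$, the integer $(w,u)$ is odd. Hence $v$ has odd norm $(w,u)^2$, so $v\in L_o$, and the first part applies.

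The only delicate step is the bookkeeping of the sign in Remark~\ref{representative}. The exponent must be $(b,w)\varepsilon^t\delta$ with $b\in L_o$, which is odd for odd characteristics. Proposition~\ref{p2} confirms that the condition $v\in L_o$ does not depend on the choice of $w$, so the hypothesis is well posed.
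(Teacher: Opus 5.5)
Your proof is correct and follows the paper's argument. You change variables by $\phi$, recognize the result as the series for the characteristic vector $\phi^{-1}(w)=w-2\phi^{-1}(v)$, and apply Remark~\ref{representative} with $b\in L_o$ to obtain the sign $-1$ on odd characteristics. In the norm-one case, your reflection in $u$ is exactly the paper's sign change of the split-off $\ZZ$ coordinate; checking that $(w,u)$ is odd for arbitrary $w$ simply replaces the paper's choice of $w$ with first coordinate one.
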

\begin{proof}
Changing variables by $\phi$ and applying Remark~\ref{representative}
multiplies an odd-characteristic series by $-1$. If $L$ has a norm-one
vector, it splits as $\ZZ\perp L'$; choose $w$ with first coordinate one
and change the sign of this coordinate.
\end{proof}

\section{Character sums and theta traces in arbitrary rank}
If $S$ is a lattice or a finite union of lattice cosets, write
\[
 \theta^{(g)}(S,\tau)=\sum_{X\in S^g}
 e\!\left(\tfrac12\tr((X,X)\tau)\right).
\]
This notation does not assert that $S$ is additive. With binary
representatives define
\begin{equation}\label{normalized}
 F_{\varepsilon,\delta}
 =e\!\left(-\tfrac W4\varepsilon^t\delta\right)
   \thc{\varepsilon}{\delta}(L,\tau),\qquad
 \mathcal T_{L,g}^{\pm}
 =\sum_{(-1)^{\varepsilon^t\delta}=\pm1}F_{\varepsilon,\delta}.
\end{equation}
In terms of $x_j=v_j+\varepsilon_ja$,
\[
 F_{\varepsilon,\delta}
 =\sum_{v_j\in L}(-1)^{\sum_j\delta_j(w,v_j)}
 e\!\left(\tfrac12\tr((V+a\varepsilon^t,V+a\varepsilon^t)\tau)\right).
\]
Thus the normalization extracts precisely a character of $L/E$.

\begin{thm}\label{neighbor}
For every rank $m\ge1$ and degree $g\ge1$,
\begin{align}
 \sum_{\varepsilon,\delta}(-1)^{t\varepsilon^t\delta}
 F_{\varepsilon,\delta}&=2^g\theta^{(g)}(K_t),\quad t=0,1,
 \label{generaltrace}\\
 \mathcal T_{L,g}^{\pm}
 &=2^{g-1}\bigl(\theta^{(g)}(K_0)\pm\theta^{(g)}(K_1)\bigr).
 \label{weightedtrace}
\end{align}
\end{thm}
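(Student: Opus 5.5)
The identity is a finite character sum, so we can compute it term by term. We start from the rewritten form of $F_{\varepsilon,\delta}$ given just before the theorem. There the summation runs over $X=V+a\varepsilon^t$ with $V\in L^g$, and the weight is $(-1)^{\sum_j\delta_j(w,v_j)}$. Since $w$ is characteristic, $(w,v)\equiv(v,v)\pmod 2$, so $(-1)^{(w,v)}$ is exactly the nontrivial character of $L/E$: it equals $1$ on $E$ and $-1$ on $L_o$. For fixed $\varepsilon$, the sum over $\delta\in\{0,1\}^g$ factors coordinatewise. First take $t=0$. Then $\sum_{\delta_j\in\{0,1\}}(-1)^{\delta_j(w,v_j)}$ equals $2\cdot\mathbf 1_{E}(v_j)$. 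This gives
\[
 \sum_{\delta}F_{\varepsilon,\delta}=2^g\sum_{v_j\in E}e\!\left(\tfrac12\tr((V+a\varepsilon^t,V+a\varepsilon^t)\tau)\right),
\]
which is the theta sum over $\prod_j(E+\varepsilon_ja)$. Summing over $\varepsilon\in\{0,1\}^g$ assembles the Cartesian product $(E\cup(E+a))^g=K_0^g$. The union $E\cup(E+a)$ is disjoint because $a\notin L$ (Proposition~\ref{classification}), so no term is counted twice. This proves \eqref{generaltrace} for $t=0$.

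For $t=1$ the extra sign $(-1)^{\varepsilon^t\delta}$ also factors coordinatewise as $\prod_j(-1)^{\varepsilon_j\delta_j}$. For coordinates with $\varepsilon_j=0$ the computation is unchanged and forces $v_j\in E$. For coordinates with $\varepsilon_j=1$, the $\delta_j$-sum becomes $\sum_{\delta_j}(-1)^{\delta_j((w,v_j)+1)}=2\cdot\mathbf 1_{L_o}(v_j)$. So in those coordinates $x_j$ ranges over $L_o+a=E+v_0+a$. Summing over $\varepsilon$ then yields $2^g\theta^{(g)}(E\cup(L_o+a))=2^g\theta^{(g)}(K_1)$. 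Again the union is disjoint, since $v_0+a\notin E$ because $a\notin L$. Throughout we use only the binary representatives fixed in the definition of $F$, so the sign conventions of \eqref{reduction} never enter.

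Formula \eqref{weightedtrace} follows by linear algebra. Adding the $t=0$ and $t=1$ identities gives $\sum(1+(-1)^{\varepsilon^t\delta})F_{\varepsilon,\delta}=2\mathcal T^{+}_{L,g}$. Subtracting them gives $2\mathcal T^{-}_{L,g}$. Dividing by $2$ yields the stated expressions.

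The only point needing care, and the step I would check first, is the passage from \eqref{definition} to the rewritten form of $F_{\varepsilon,\delta}$. We must confirm that the phase $e(-\tfrac W4\varepsilon^t\delta)$ exactly cancels the constant part of $\tfrac12\delta_j(w,x_j)$ coming from $x_j=v_j+\varepsilon_ja$. That constant part is $\tfrac12\delta_j\varepsilon_j W/2$, whose total is $\tfrac W4\varepsilon^t\delta$. This check is valid in all ranks, including odd $m$ where $W/4\notin\tfrac12\ZZ$, because the exponential is taken literally rather than as a sign. After that, absolute convergence, from positive definiteness, justifies the rearrangements.
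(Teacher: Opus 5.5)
Your proposal is correct and follows essentially the same route as the paper's proof: you factor the $\delta$-sum coordinatewise into indicators of $E$ or $L_o$, sum over $\varepsilon$ to recover $K_t^g$, and then add and subtract the two identities. You also make explicit two points the paper leaves implicit, namely that the phase $e(-\tfrac W4\varepsilon^t\delta)$ cancels exactly in every rank, and that the coset unions are disjoint because $a\notin L$, so no term is counted twice.
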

\begin{proof}
For fixed $\varepsilon$, summation over $\delta$ gives the product
\[
 \prod_{j=1}^g\sum_{\delta_j=0}^1
 (-1)^{\delta_j((w,v_j)+t\varepsilon_j)}.
\]
This is $2^g$ precisely when $v_j\in E$ for $t\varepsilon_j=0$
and $v_j\in L_o$ for $t\varepsilon_j=1$; otherwise it is zero.
Summing over $\varepsilon$ leaves exactly $K_t^g$. Adding and subtracting
the two identities gives \eqref{weightedtrace}.
\end{proof}

For comparison define the unweighted sums
\[
 \theta_{L,g}^{\pm}=\sum_{(-1)^{\varepsilon^t\delta}=\pm1}
 \thc{\varepsilon}{\delta}(L,\tau).
\]
If $m$ is even, then
\begin{equation}\label{evenranktraces}
\begin{aligned}
 \theta_{L,g}^+&=2^{g-1}(\theta^{(g)}(K_0)+\theta^{(g)}(K_1)),\\
 \theta_{L,g}^-&=(-1)^{m/2}2^{g-1}
 (\theta^{(g)}(K_0)-\theta^{(g)}(K_1)).
\end{aligned}
\end{equation}
Indeed $F_{\varepsilon,\delta}=(-1)^{(W/2)\varepsilon^t\delta}
\theta[\varepsilon,\delta]$. This sign is constant on each parity class.

\subsection{The principal specialization: \texorpdfstring{$4\mid m$}{4 divides m}}
Now $F_{\varepsilon,\delta}=\theta[\varepsilon,\delta]$, and
\eqref{weightedtrace} becomes the unweighted integral-neighbor formula
\begin{equation}\label{trace}
 \boxed{\quad\theta_{L,g}^{\pm}
 =2^{g-1}\bigl(\theta^{(g)}(L_1)\pm\theta^{(g)}(L_2)\bigr).\quad}
\end{equation}
Both signs have the same meaning for $m\equiv0$ and $m\equiv4\pmod8$;
only the parity of the neighbors and the transformation multipliers change.

\begin{cor}\label{c1}
If $8\mid m$ and an isometry $L_1\to L_2$ preserves $E$, every
odd-characteristic series of $L$ vanishes in every degree.
\end{cor}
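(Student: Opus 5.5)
The plan is to chain Proposition~\ref{p4} with Proposition~\ref{p5}; this is essentially immediate. Let $\psi\colon L_1\to L_2$ be an isometry with $\psi(E)=E$. First I note that $L_1,L_2$ span the same rational quadratic space $E\otimes\QQ=L\otimes\QQ$. So $\psi$ extends $\QQ$-linearly to an isometry of that space, and this extension preserves $E^*$ because it preserves $E$. Since $8\mid m$, Proposition~\ref{p4} applies directly. It says that this extension preserves $L$ and sends $w$ to $w+2v$ with $v\in L_o$. Let me recall why: $\psi$ permutes the three nonzero classes of $E^*/E\cong(\ZZ/2\ZZ)^2$ and preserves norms modulo $2$. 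The class $v_0+E$ is the unique one of odd norm, so it is fixed, and hence $L=E\cup(v_0+E)$ is preserved. Because $\psi$ carries $L_1$ to $L_2$, it sends $a+E$ to $a+v_0+E$. This gives $\psi(a)=a+v_0+e$ for some $e\in E$, and therefore $\psi(w)=w+2(v_0+e)$ with $v_0+e\in L_o$.

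Write $\phi=\psi|_L\in\Aut(L)$. Then $\phi(w)=w+2v$ with $v\in L_o$, which is exactly the hypothesis of Proposition~\ref{p5}. That proposition gives the vanishing of every odd-characteristic series in every degree. Its mechanism is as follows. Substituting $x_j\mapsto\phi(x_j)$ in \eqref{definition} shows that $\thc{\varepsilon}{\delta}(L,\tau;w)=\thc{\varepsilon}{\delta}(L,\tau;\phi(w))$. By Remark~\ref{representative} with $b=v$, this equals $(-1)^{(v,w)\varepsilon^t\delta}\thc{\varepsilon}{\delta}(L,\tau;w)$. Since $v\in L_o$, we have $(v,w)\equiv(v,v)\equiv1\pmod2$, so an odd characteristic picks up the sign $-1$ and its series vanishes.

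The only point needing care is the use of Proposition~\ref{p4}. One must know that the rational extension of $\psi$ really preserves $E^*$. One must also know that a norm class in $E^*/E$ is well defined modulo $2$, which uses that $E$ is even and $(E^*,E)\subseteq\ZZ$. Both facts are immediate, so I expect no real obstacle. The corollary is essentially a restatement of Propositions~\ref{p4} and~\ref{p5} combined.
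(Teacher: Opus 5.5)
Your proposal is correct and matches the paper's proof, which simply chains Proposition~\ref{p4} with Proposition~\ref{p5}. Your unpacking of both steps is accurate: $8\mid m$ makes $v_0+E$ the unique odd-norm class of $E^*/E$, which gives $\phi(w)=w+2v$ with $v\in L_o$, and the sign then comes from Remark~\ref{representative} with $(v,w)$ odd.
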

\begin{proof}
Apply Propositions~\ref{p4} and \ref{p5}.
\end{proof}
Equality of the two neighbor theta series in a given degree proves the
vanishing of the odd trace; without an additional argument it does not
prove the vanishing of every summand.

\subsection{Odd rank and characters of a rational quadratic form}
Suppose now that $m$ is odd. We may take $v_0=w$, so that
$K_0=E\cup(E+a)$ and $K_1=E\cup(E+3a)$. Put
\[
 R=E^*=E+\ZZ a,\qquad \det R=\tfrac14,
 \qquad\eta(x)=i^j\quad(x\in ja+E).
\]
This character has order four. For
$r=(r_1,\ldots,r_g)\in\{0,1,2,3\}^g$, define
\[
 \Xi_g(R;r;\tau)=\sum_{X\in R^g}
 \prod_j\eta(x_j)^{r_j}
 e\!\left(\tfrac12\tr((X,X)\tau)\right).
\]
The elementary indicator identities on $R/E$ give
\begin{equation}\label{fourierodd}
 2^g\theta^{(g)}(K_t)
 =2^{-g}\sum_{r\in\{0,1,2,3\}^g}
 \prod_{j=1}^g(1+i^{(-1)^{t+1}r_j})\,\Xi_g(R;r;\tau).
\end{equation}
For example $\mathbf1_{K_0}(x)=\frac14\sum_{r=0}^3
(1+i^{-r})\eta(x)^r$, and the formula for $K_1$ has $i^r$ in place
of $i^{-r}$. Multiplying these identities column by column proves
\eqref{fourierodd}. Choosing a basis of $R$ realizes each $\Xi_g$ as
a theta series of a positive-definite rational Gram matrix with an
additive character. Thus the arbitrary-rank formulas do have a rational
quadratic-form interpretation even when two neighbor lattices do not exist.

Negation sends $K_0$ to $K_1$, so their theta series agree in every degree,
in agreement with $\mathcal T_{L,g}^-=0$. In odd rank the factor in
\eqref{normalized} need not be constant on the even characteristics;
the weighted and unweighted even traces must not be identified.

\section{Signed representations when \texorpdfstring{$m\equiv4\pmod8$}{m is 4 modulo 8}}
Let $G_g=\Sp(2g,\FF_2)$ and let $\mathcal C_g^{\pm}$ denote the
even and odd characteristics, of cardinalities
$N_g^{\pm}=2^{g-1}(2^g\pm1)$. Write
\[
 \Theta_{L,g}^{\pm}=\Span_{\CC}\{\theta[c](L):c\in\mathcal C_g^{\pm}\}.
\]
The signs on even characteristics in this rank are exactly those of the
fourth powers of classical theta constants. They factor through $G_g$.
Let $V_g^{\mathrm{sgn},+}$ be the resulting signed module with basis
$e_c$ for $c\in\mathcal C_g^+$. With the left action on functions
$M\cdot f=f|_{m/2}M^{-1}$, the map
\[
 \rho:V_g^{\mathrm{sgn},+}\longrightarrow\Theta_{L,g}^+,
 \qquad e_c\longmapsto\theta[c](L)
\]
is an equivariant surjection. The module is induced from the nontrivial
sign character of the stabilizer of the zero characteristic.

For $c=(\varepsilon,\delta)$ and $d=(\alpha,\beta)$ let
$\langle c,d\rangle=\varepsilon^t\beta+\delta^t\alpha\pmod2$.
The commuting operator
\[
 H_g e_c=\sum_{d\in\mathcal C_g^+}(-1)^{\langle c,d\rangle}e_d
\]
has two irreducible eigenspaces, with eigenvalues $2^g$ and $-2^{g-1}$,
and respective dimensions
\begin{equation}\label{signeddimensions}
 a_g=\frac{(2^g+1)(2^{g-1}+1)}3,\qquad
 b_g=\frac{2^{2g}-1}3.
\end{equation}
These facts are established in \cite[Sections 2--4]{FSM}, following
Frame and Fay; they identify the signed representation used here.

\begin{thm}\label{rankfourdim}
For every odd unimodular $L$ with $m\equiv4\pmod8$,
\[
 \dim\Theta_{L,g}^+\in\{a_g,N_g^+\}.
\]
The dimension equals $a_g$ if and only if
\begin{equation}\label{smallercriterion}
 \theta^{(g)}(L_1)+\theta^{(g)}(L_2)=2\theta^{(g)}(L).
\end{equation}
In particular, $L\cong L_1\cong L_2$ implies dimension $a_g$ in every degree.
\end{thm}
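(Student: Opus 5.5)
The plan is to use the representation theory recalled just before the statement. Since $\rho:V_g^{\mathrm{sgn},+}\to\Theta_{L,g}^+$ is an equivariant surjection, its kernel is a $G_g$-submodule. The module $V_g^{\mathrm{sgn},+}$ is the direct sum of the two irreducible, non-isomorphic eigenspaces of $H_g$: the $2^g$-eigenspace $U_a$ of dimension $a_g$ and the $(-2^{g-1})$-eigenspace $U_b$ of dimension $b_g$. They are non-isomorphic because $a_g\neq b_g$ for $g\ge1$. Hence the kernel is one of $0$, $U_a$, $U_b$, or everything, and $\dim\Theta_{L,g}^+\in\{0,a_g,b_g,N_g^+\}$.

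The value $0$ is excluded by Corollary~\ref{s4}, since $\theta[0,0]$ has constant term one. To exclude $b_g$ I will show that $U_a$ cannot lie in the kernel. I will find an explicit vector of $U_a$ with nonzero image. The natural candidate is $H_g e_0$. Since $H_g$ commutes with the action, $H_g(V)\subseteq U_a$ because $H_g$ kills nothing in $U_a$ and acts invertibly on $U_b$. So I will use instead the projector
\[
P_a=\frac{H_g+2^{g-1}}{2^g+2^{g-1}},
\]
and compute $\rho(P_a e_0)$. Here $\rho(H_ge_0)=\sum_{d\in\mathcal C_g^+}\theta[d]=\theta^+_{L,g}$, so
\[
\rho(P_ae_0)=\frac{\theta^+_{L,g}+2^{g-1}\theta[0,0]}{3\cdot2^{g-1}}.
\]
By \eqref{trace} this equals $\bigl(\theta^{(g)}(L_1)+\theta^{(g)}(L_2)+\theta^{(g)}(L)\bigr)/3$, using $\theta[0,0]=\theta^{(g)}(L)$. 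This has constant term $1\neq0$, so $U_a\not\subseteq\ker\rho$ and the dimension lies in $\{a_g,N_g^+\}$.

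For the criterion, the dimension equals $a_g$ iff $\ker\rho=U_b$, that is, iff $\rho$ kills the image of the complementary projector $P_b=1-P_a$. Since $U_b$ is irreducible and is generated by $P_b e_0$ (the $G_g$-orbit of $e_0$ spans $V$, so the orbit of $P_be_0$ spans $U_b$), this is equivalent to $\rho(P_be_0)=0$. Computing as above,
\[
\rho(P_be_0)=\theta^{(g)}(L)-\tfrac13\bigl(\theta^{(g)}(L_1)+\theta^{(g)}(L_2)+\theta^{(g)}(L)\bigr),
\]
and its vanishing is exactly \eqref{smallercriterion}. The final assertion follows immediately, since isometric lattices have equal theta series in every degree.

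The main obstacle is bookkeeping of signs. We need $\rho(H_ge_0)=\theta^+_{L,g}$ with all coefficients $+1$. This requires $\langle 0,d\rangle=0$, which is clear. We also need the basis $e_d\mapsto\theta[d]$ to be compatible with the signed action, so that $H_g$ genuinely commutes with it. I would take this from \cite{FSM}, which fixes the sign conventions on representatives. I would also check that $e_0$, rather than some other vector, generates $V$ as a module, which holds since the module is induced from the stabilizer of the zero characteristic.
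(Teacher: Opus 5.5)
Your route is the paper's: split $V_g^{\mathrm{sgn},+}=U_a\oplus U_b$ by $H_g$, show that $U_a$ survives, and test $U_b$ on a single generator. The differences are cosmetic. The paper detects $U_a$ with $z=\sum_\delta e_{(0,\delta)}$, whose image has constant term $2^g$ without appeal to \eqref{trace}. Its test vector $u=(H_g-2^g)e_0$ equals $-3\cdot2^{g-1}P_be_0$. Your formulas for $\rho(P_ae_0)$ and $\rho(P_be_0)$ are correct. Your aside ``$H_g(V)\subseteq U_a$'' is false: both eigenvalues are nonzero, so $H_g$ is invertible. You do not use it, however.

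There is one genuine gap. The claim ``non-isomorphic because $a_g\neq b_g$ for $g\ge1$'' fails at $g=2$, where $a_2=b_2=5$; indeed $a_g=b_g$ exactly when $2^{g-1}+1=2^g-1$, i.e.\ $g=2$.

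The dichotomy itself survives without non-isomorphism. Since $\ker\rho\cap U_a=0$, $\ker\rho$ projects injectively into the irreducible $U_b$, so $\dim\ker\rho\in\{0,b_g\}$.

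The ``only if'' half of the criterion does not survive. If $U_a\cong U_b$, then $\ker\rho$ could be the graph of a nonzero $G_2$-map $U_b\to U_a$. That would give dimension $5$ while $U_b\not\subseteq\ker\rho$, i.e.\ $\rho(P_be_0)\neq0$. So you need an actual proof that the two constituents are distinct.

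A uniform argument works. $G_g$ acts doubly transitively on $\mathcal C_g^+$: the stabilizer $H$ of $0$ is transitive on the remaining even characteristics, as recalled in the section on $8\mid m$. Hence $|H\backslash G_g/H|=2$. Mackey's formula then gives $\dim\operatorname{End}_{G_g}(\operatorname{Ind}_H^{G_g}\chi)\le 2$ for the linear character $\chi$ defining the signed module. An isomorphism $U_a\cong U_b$ would force this dimension to be $4$.

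The paper's own step ``the kernel is therefore either zero or the $-2^{g-1}$ eigenspace'' relies on the same multiplicity-freeness without stating it. The same remark closes that step as well.
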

\begin{proof}
Put $z=\sum_{\delta\in\FF_2^g}e_{(0,\delta)}$.
Character orthogonality gives $H_gz=2^gz$.
The constant coefficient of $\rho(z)$ is $2^g$, since each summand
has zero shift and constant coefficient one. Thus the $2^g$ eigenspace
has nonzero image and, by irreducibility, survives injectively.
The kernel is therefore either zero or the $-2^{g-1}$ eigenspace.

The nonzero vector
$u=(H_g-2^g)e_0=\sum_{c\in\mathcal C_g^+}e_c-2^ge_0$
lies in that latter eigenspace and generates it as a $G_g$-module.
Consequently the smaller dimension occurs precisely when $\rho(u)=0$.
By \eqref{trace}, this condition is
$2^{g-1}(\theta^{(g)}(L_1)+\theta^{(g)}(L_2))-2^g\theta^{(g)}(L)=0$.
\end{proof}
In particular, the value $b_g$ alone cannot occur: it would discard the
constituent detected by the constant coefficient. The representation is
signed, so the arguments involving a trivial invariant line in the next
section do not apply here.

\section{Permutation representations when \texorpdfstring{$8\mid m$}{8 divides m}}
Assume throughout this section that $m\equiv0\pmod8$, and put
$G_g=\Gamma_g/\Gamma_g(2)=\Sp(2g,\FF_2)$.
Let $\mathcal C_g^+$ and $\mathcal C_g^-$ be the sets of even and odd
characteristics. Their cardinalities are
\[
 N_g^+=2^{g-1}(2^g+1),\qquad N_g^-=2^{g-1}(2^g-1).
\]
Let $V_g^{\pm}=\CC[\mathcal C_g^{\pm}]$ be the permutation modules and
\[
 \Theta_{L,g}^{\pm}
 =\Span_{\CC}\{\theta[c](L,\tau):c\in\mathcal C_g^{\pm}\}.
\]
To use a left action on functions, let $M$ act by $f\mapsto f|_kM^{-1}$.
Proposition~\ref{modularity} gives equivariant surjections
\[
 \rho_g^{\pm}:V_g^{\pm}\longrightarrow\Theta_{L,g}^{\pm},
 \qquad e_c\longmapsto\theta[c](L,\tau).
\]
The stabilizer of the zero characteristic is the theta subgroup modulo
$\Gamma_g(2)$; thus $V_g^+$ is also its induced trivial representation.

We recall the decompositions \cite{Fra}
\begin{equation}\label{decomposition}
 V_g^+=\mathbf1\oplus A(g),\qquad
 V_g^-=\mathbf1\oplus B(g),
\end{equation}
where $A(g)$ is irreducible of dimension $N_g^+-1$, and $B(g)$ is
irreducible of dimension $N_g^--1$ for $g\ge2$. We set $B(1)=0$.
One way to see the irreducibility is to identify characteristics with
quadratic refinements of the symplectic form on $\FF_2^{2g}$.
For a fixed refinement $q$, the other refinements of the same Arf invariant
are $q+\langle v,\cdot\rangle$ with $q(v)=0$. Its orthogonal group is
transitive on the nonzero such vectors. Hence each nontrivial permutation
action is doubly transitive, and its augmentation module is irreducible.

\begin{thm}\label{distinguish}
The map $\rho_g^+$ is an isomorphism. In particular, all the
$N_g^+$ even-characteristic theta series are linearly independent.
\end{thm}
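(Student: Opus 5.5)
Since $\rho_g^+$ is a $G_g$-equivariant surjection and $V_g^+=\mathbf1\oplus A(g)$ with $A(g)$ irreducible and not isomorphic to $\mathbf1$, the kernel of $\rho_g^+$ is a submodule, hence one of $0$, $\mathbf1$, $A(g)$, or $V_g^+$. The plan is to exclude the three nonzero possibilities by exhibiting a vector in each constituent with nonzero image. The trivial constituent is spanned by $s=\sum_{c\in\mathcal C_g^+}e_c$, and by \eqref{trace} its image is
\[
 \rho_g^+(s)=\theta_{L,g}^+=2^{g-1}\bigl(\theta^{(g)}(L_1)+\theta^{(g)}(L_2)\bigr),
\]
which has constant coefficient $2^g\neq0$. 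So $\mathbf1\not\subseteq\ker\rho_g^+$. This also rules out $V_g^+$.

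It remains to show $A(g)\not\subseteq\ker\rho_g^+$. I would use the augmentation vector $u=s-N_g^+e_0\in A(g)$, which generates $A(g)$ by irreducibility. Its image is
\[
 \rho_g^+(u)=2^{g-1}\bigl(\theta^{(g)}(L_1)+\theta^{(g)}(L_2)\bigr)-N_g^+\,\theta^{(g)}(L).
\]
The natural test is the coefficient of the norm-one term, i.e.\ the Fourier coefficient at $\tau$-exponent corresponding to $X=(x,0,\dots,0)$ with $(x,x)=1$. Since $8\mid m$, $L_1$ and $L_2$ are even (Proposition~\ref{q1}), so they contribute nothing there. Hence this coefficient of $\rho_g^+(u)$ is $-N_g^+r_1(L)$, where $r_1$ counts norm-one vectors.

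This works whenever $L$ represents one. The main obstacle is that $L$ need not have norm-one vectors. In that case I would look instead at the lowest odd norm $n$ represented by $L$. Such a norm exists because $L$ is odd: $L_o$ is nonempty and consists of odd-norm vectors. The evenness of $L_1$ and $L_2$ again kills their contribution at the diagonal coefficient $\diag(n,0,\dots,0)$, namely at the Fourier coefficient indexed by the half-integral matrix with $n/2$ in position $(1,1)$ and zeros elsewhere. Meanwhile $\theta^{(g)}(L)$ has coefficient equal to the number of vectors of norm $n$, which is positive. Thus $\rho_g^+(u)\neq0$, so $A(g)\not\subseteq\ker\rho_g^+$, and the kernel is $0$.

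The only care needed is to check that the Fourier coefficient comparison is legitimate. All $F$'s are holomorphic Fourier series in $\tau$ with exponents $\tfrac12\tr((X,X)\tau)$, so the coefficients can be compared termwise. Also, $e_0$ corresponds to $\theta[0,0]=\theta^{(g)}(L)$. With these checks, linear independence of the $N_g^+$ series follows from injectivity.
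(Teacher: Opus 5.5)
Your proof is correct and follows essentially the same route as the paper. The invariant vector maps to $\theta_{L,g}^+$, whose constant coefficient is $2^g$, and the coefficient at $\diag(n,0,\ldots,0)$ for an odd norm $n$ (where the even neighbors contribute nothing) shows that $A(g)$ is not killed; your explicit vector $u=s-N_g^+e_0$ simply makes concrete the paper's statement that ``the image is not contained in the invariant line.'' One small point: you do not need $n$ to be the lowest odd norm, since any odd norm represented by $L$ works, which is how the paper phrases it.
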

\begin{proof}
The invariant vector $\sum_{c\in\mathcal C_g^+}e_c$ maps to
\[
 \theta_{L,g}^+=2^{g-1}\bigl(\theta^{(g)}(L_1)+\theta^{(g)}(L_2)\bigr),
\]
whose constant coefficient is $2^g$.
Choose $v\in L$ of odd norm $n$. The coefficient of $\theta[0,0](L)$
at the Gram index $\diag(n,0,\ldots,0)$ is positive, whereas that of
$\theta_{L,g}^+$ is zero, since its two neighbors are even.
Thus the image is not contained in the invariant line.
The map is nonzero on both irreducible constituents in
\eqref{decomposition}, and is therefore injective as well as surjective.
\end{proof}

Set $D_g=\theta^{(g)}(L_1)-\theta^{(g)}(L_2)$.
The image of the invariant vector in $V_g^-$ is $2^{g-1}D_g$.
It follows that $D_g\ne0$ is necessary for $\rho_g^-$ to be an
isomorphism. For $g=1$ this condition is also sufficient. We do not prove its
sufficiency in the same degree for $g\ge2$.

For $h\ge2$, the Siegel operator is
\[
 \Phi(f)(\tau)=\lim_{t\to+\infty}
 f\!\begin{pmatrix}\tau&0\\0&it\end{pmatrix}.
\]
If $\varepsilon=(\bar\varepsilon,\varepsilon_h)^t$ and
$\delta=(\bar\delta,\delta_h)^t$, the defining series gives
\begin{equation}\label{phi}
 \Phi\!\left(\thc{\varepsilon}{\delta}^{(h)}(L)\right)
 =\begin{cases}
 \thc{\bar\varepsilon}{\bar\delta}^{(h-1)}(L),&\varepsilon_h=0,\\
 0,&\varepsilon_h=1.
 \end{cases}
\end{equation}
Indeed, the limit retains only a zero last column, which is impossible
when $\varepsilon_h=1$, since $w/2\notin L$.
Each fixed odd characteristic of degree $h-1$ has two odd extensions with
$\varepsilon_h=0$, according to the choice $\delta_h=0,1$. Hence
\begin{equation}\label{phitrace}
 \Phi(\theta_{L,h}^-)=2\theta_{L,h-1}^-,\qquad
 \Phi(D_h)=D_{h-1}.
\end{equation}
These identities do not require any nonvanishing assumption.

\begin{prop}\label{odd-distinguish}
If $D_g\not\equiv0$ for some $g\ge1$, then
$\rho_{g+1}^-:V_{g+1}^-\to\Theta_{L,g+1}^-$ is an isomorphism.
\end{prop}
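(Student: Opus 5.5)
Since $g+1\ge2$, the decomposition \eqref{decomposition} gives $V_{g+1}^-=\mathbf1\oplus B(g+1)$ with $B(g+1)$ irreducible and not isomorphic to the trivial module. The map $\rho_{g+1}^-$ is surjective and equivariant, so its kernel is a submodule. The plan is therefore to show that $\rho_{g+1}^-$ is nonzero on each of the two constituents; the kernel is then zero and $\rho_{g+1}^-$ is an isomorphism. I would use the Siegel operator identities \eqref{phi} and \eqref{phitrace} to pass the hypothesis $D_g\not\equiv0$ up from degree $g$ to degree $g+1$.

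\emph{Trivial constituent.} The invariant vector $\sum_{c\in\mathcal C_{g+1}^-}e_c$ maps to $\theta_{L,g+1}^-=2^{g}D_{g+1}$. By \eqref{phitrace}, $\Phi(D_{g+1})=D_g\not\equiv0$, so $D_{g+1}\not\equiv0$. Hence the trivial constituent is not in the kernel.

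\emph{The constituent $B(g+1)$.} Suppose, for contradiction, that $B(g+1)\subseteq\ker\rho_{g+1}^-$. The module $B(g+1)$ is the augmentation submodule, spanned by the differences $e_c-e_d$. So this assumption says that all odd series $\theta[c]^{(g+1)}(L)$, $c\in\mathcal C_{g+1}^-$, are equal to one common function $f$. Apply $\Phi$ and use \eqref{phi}:
\begin{itemize}
\item[(i)] The characteristic with $\varepsilon=(0,\ldots,0,1)^t$ and $\delta=(0,\ldots,0,1)^t$ is odd and has $\varepsilon_{g+1}=1$. Its series has $\Phi$-image $0$, so $\Phi(f)=0$.
\item[(ii)] For every odd characteristic $(\bar\varepsilon,\bar\delta)$ of degree $g$, its extension by $\varepsilon_{g+1}=\delta_{g+1}=0$ is odd. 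Hence $\Phi(f)=\theta[\bar\varepsilon,\bar\delta]^{(g)}(L)$.
\end{itemize}
Combining (i) and (ii), every odd series of degree $g$ vanishes. Summing them gives $2^{g-1}D_g=\theta_{L,g}^-=0$, which contradicts the hypothesis. So $B(g+1)$ is not in the kernel, and $\rho_{g+1}^-$ is injective as well as surjective.

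The only delicate point is the description of $B(g+1)$ as the span of the differences $e_c-e_d$. This is the standard identification of the nontrivial constituent of a permutation module with its augmentation submodule, which is how $B$ is defined just after \eqref{decomposition}. Once that is granted, the argument is a short comparison of Siegel limits.
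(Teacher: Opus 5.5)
Your proof is correct. It uses the same Siegel-operator mechanism as the paper, but organizes the $B(g+1)$ step differently.

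The trivial constituent is handled exactly as in the paper. The shared mechanism for $B(g+1)$ is that $\Phi$ kills odd series with $\varepsilon_{g+1}=1$ and restricts those with $\varepsilon_{g+1}=0$ to degree $g$.

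\emph{The paper's route.} It first invokes the all-or-nothing nonvanishing result (Corollary~\ref{s4}), which rests on transitivity of the affine action and nonvanishing of the multipliers. From this it knows that every odd series in degrees $g$ and $g+1$ is individually nonzero. It then exhibits two odd series of degree $g+1$ with nonzero and zero Siegel images. These are therefore linearly independent, so the image is more than the invariant line.

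\emph{Your route.} You argue by contradiction: if $B(g+1)$ lay in the kernel, all odd series of degree $g+1$ would coincide. Comparing the two kinds of Siegel images then forces every odd series of degree $g$ to vanish, hence $\theta_{L,g}^-=2^{g-1}D_g=0$.

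\emph{Comparison.} Your version uses only the nonvanishing of the sum $\theta_{L,g}^-$, never that of individual summands. It therefore bypasses Corollary~\ref{s4} and its transitivity input entirely, and is slightly more self-contained. The paper's version yields the marginally more concrete statement that two specific odd series are independent.

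The point you flag as delicate, that $B(g+1)$ is spanned by the differences $e_c-e_d$, is just the paper's own identification of $B$ as the augmentation module.
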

\begin{proof}
Since $\Phi(D_{g+1})=D_g\ne0$, the invariant constituent has nonzero
image in degree $g+1$. The odd traces in both genera are nonzero, so
Corollary~\ref{s4} shows that every odd-characteristic series in these
genera is nonzero.
In degree $g+1$, choose an odd characteristic with
$\varepsilon_{g+1}=0$ and another with $\varepsilon_{g+1}=1$.
Their nonzero theta series have, respectively, nonzero and zero images
under $\Phi$, so they are linearly independent.
Thus the image is not just the invariant line. The irreducibility of
$B(g+1)$ now proves the assertion.
\end{proof}

\begin{cor}\label{dimensions}
For $g\ge2$, the representation-theoretic possibilities are
\[
 \dim\Theta_{L,g}^-\in\{0,1,N_g^--1,N_g^-\}.
\]
For $g=1$, the dimension is zero or one.
\end{cor}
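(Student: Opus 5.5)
The dimension of $\Theta_{L,g}^-$ is the dimension of the image of the equivariant surjection $\rho_g^-:V_g^-\to\Theta_{L,g}^-$, so it equals $N_g^--\dim\ker\rho_g^-$. The kernel is a $G_g$-submodule of $V_g^-$. By \eqref{decomposition}, for $g\ge2$ we have $V_g^-=\mathbf1\oplus B(g)$, with $\mathbf1$ and $B(g)$ irreducible. They are non-isomorphic, since $B(g)$ has dimension $N_g^--1\ge 5$ for $g\ge2$ while $\mathbf1$ has dimension one.

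Because the constituents are non-isomorphic and the module is semisimple (complex representations of a finite group), the only submodules are $0$, $\mathbf1$, $B(g)$ and $V_g^-$. The corresponding kernels give images of dimension $N_g^-$, $N_g^--1$, $1$ and $0$, which is exactly the list $\{0,1,N_g^--1,N_g^-\}$.

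For $g=1$ we have $N_1^-=1$: the only odd characteristic is $(1,1)$. So $V_1^-$ is one-dimensional, and its image has dimension zero or one. This agrees with the convention $B(1)=0$.

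The only step needing care is that the submodule lattice is as stated. This uses the irreducibility of $B(g)$ for $g\ge2$, taken from \cite{Fra} via the double-transitivity argument recalled above, together with the fact that the trivial constituent occurs with multiplicity one, which holds because $\mathbf1$ is not isomorphic to $B(g)$. Nothing about $L$ is needed beyond the equivariance of $\rho_g^-$ given by Proposition~\ref{modularity} with trivial multipliers when $8\mid m$.
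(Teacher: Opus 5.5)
Your proposal is correct and follows the paper's argument: the paper likewise treats $\Theta_{L,g}^-$ as a quotient of the multiplicity-free module $V_g^-=\mathbf1\oplus B(g)$, so its dimension is a sum over a subset of constituents. You additionally spell out the four possible kernels and handle $g=1$ via $N_1^-=1$.
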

\begin{proof}
The space is a quotient of the multiplicity-free module in
\eqref{decomposition}. Its dimension is therefore the sum of the
dimensions of a subset of its constituents.
\end{proof}
This does not assert that every possibility occurs.

\section{Vanishing and nonvanishing across genera}
The Siegel-operator calculation \eqref{phi} is valid in every rank,
and also holds for $F_{\varepsilon,\delta}$. In particular,
\[
 \Phi(\mathcal T_{L,h}^{\pm})=2\mathcal T_{L,h-1}^{\pm}.
\]
For even $m$ put $\mathcal D_g=\theta^{(g)}(K_0)-\theta^{(g)}(K_1)$.
Then $\Phi(\mathcal D_h)=\mathcal D_{h-1}$.

\begin{prop}\label{s3}
Let $m$ be even. If $\mathcal D_g\ne0$, every odd-characteristic series
in degree $g$ is nonzero. If $\mathcal D_r\ne0$, this remains true in
every degree $h\ge r$.
\end{prop}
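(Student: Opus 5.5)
The plan is to combine the weighted trace identity \eqref{weightedtrace} with the all-or-nothing part of Corollary~\ref{s4}, and then to propagate nonvanishing upward in degree through the Siegel operator.

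For even $m$, identity \eqref{weightedtrace} gives
\[
 \mathcal T_{L,g}^-=2^{g-1}\bigl(\theta^{(g)}(K_0)-\theta^{(g)}(K_1)\bigr)=2^{g-1}\mathcal D_g .
\]
Here $\mathcal T_{L,g}^-$ is the sum of the $F_{\varepsilon,\delta}$ over odd characteristics. Since $e(-\tfrac W4\varepsilon^t\delta)$ is a root of unity, this is a linear combination, with nonzero coefficients, of the odd-characteristic series $\theta[\varepsilon,\delta](L)$. Equivalently, by \eqref{evenranktraces}, the unweighted trace $\theta_{L,g}^-$ equals $\pm2^{g-1}\mathcal D_g$.

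If $\mathcal D_g\ne0$, then $\mathcal T^-_{L,g}\ne0$, so at least one odd-characteristic series in degree $g$ is nonzero. The second assertion of Corollary~\ref{s4} says that in a fixed degree the odd series are either all zero or all nonzero. That corollary rests on transitivity of the affine action on odd characteristics and on the invertibility of the multipliers in \eqref{transformation}, and both hold for every even $m$. Hence every odd-characteristic series in degree $g$ is nonzero, which proves the first claim.

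For the second claim, I will show by induction that $\mathcal D_h\ne0$ for all $h\ge r$; the first claim then applies in each such degree. The inductive step uses the relation $\Phi(\mathcal D_h)=\mathcal D_{h-1}$ stated just before the proposition. It follows from \eqref{phi}: $\Phi$ kills the terms with $\varepsilon_h=1$, and on those with $\varepsilon_h=0$ it reduces the degree, so the theta series of the coset unions $K_t$ restrict correctly. The precise mechanism is that in $\theta^{(h)}(K_t)$ the limit keeps only $X$ whose last column is $0\in K_t$. Since $\Phi$ is linear, $\mathcal D_h=0$ would force $\mathcal D_{h-1}=\Phi(0)=0$. Contrapositively, $\mathcal D_{h-1}\ne0$ implies $\mathcal D_h\ne0$, and starting from $\mathcal D_r\ne0$ we get $\mathcal D_h\ne0$ for all $h\ge r$.

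The main point needing care is the passage from a nonzero sum to all terms being nonzero. This requires the multipliers to be nonzero roots of unity, so that nonvanishing transports along orbits of the affine action. It also requires the affine action to be transitive on odd characteristics, which fails for $g=1$ only in the trivial sense that there is a single odd characteristic. Both facts were already asserted after \eqref{transformation}, so the step reduces to citing Corollary~\ref{s4}. A lesser check is the limit identity behind $\Phi(\mathcal D_h)=\mathcal D_{h-1}$ for non-integral $K_t$ when $m\equiv2\pmod 4$. There normal convergence follows from positive definiteness, so termwise limits are justified exactly as in \eqref{phi}.
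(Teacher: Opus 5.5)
Your proposal is correct and follows the paper's proof. The trace identity \eqref{evenranktraces} (equivalently \eqref{weightedtrace}) makes the odd trace a nonzero multiple of $\mathcal D_g$, and the all-or-nothing corollary then gives nonvanishing of every odd summand. The relation $\Phi(\mathcal D_h)=\mathcal D_{h-1}$ propagates $\mathcal D_r\ne0$ to all $h\ge r$; your direct check that only the last column $x_h=0\in K_t$ survives the limit justifies this relation even when $K_t$ is not integral.
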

\begin{proof}
Equation \eqref{evenranktraces} gives a nonzero odd trace, and
Corollary~\ref{s4} gives nonvanishing of every summand. Apply the
Siegel operator repeatedly for the assertion in larger genera.
\end{proof}

\begin{cor}\label{t3}
If $8\mid m$ and the neighbor difference is nonzero in degree $r$,
the odd-characteristic series are linearly independent in every degree
$h\ge r+1$. If $r=1$, they are also independent in degree one.
\end{cor}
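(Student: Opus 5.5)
The plan is to reduce Corollary~\ref{t3} to Proposition~\ref{odd-distinguish}, transporting the hypothesis upward with the Siegel operator. Assume $8\mid m$ and $D_r=\theta^{(r)}(L_1)-\theta^{(r)}(L_2)\ne0$. In this rank $K_0=L_1$ and $K_1=L_2$, so $D_g=\mathcal D_g$. Let $h\ge r+1$ and set $g=h-1\ge r$.

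\textbf{Step 1: nonvanishing in degree $g$.} Apply \eqref{phitrace} $g-r$ times, starting from degree $g$. This gives
\[
 \Phi^{\,g-r}(D_g)=D_r\ne0,
\]
so $D_g\ne0$. This argument is exactly the one used in Proposition~\ref{s3}.

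\textbf{Step 2: independence in degree $h\ge r+1$.} Proposition~\ref{odd-distinguish}, applied with $D_g\not\equiv0$, shows that $\rho_{g+1}^-=\rho_h^-$ is an isomorphism. Hence the $N_h^-$ odd-characteristic series of degree $h$ are linearly independent. This settles every $h\ge r+1$.

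\textbf{Step 3: the case $r=1$, degree one.} Here there is exactly one odd characteristic, $c=(1,1)$, so $N_1^-=1$ and $V_1^-=\mathbf1$, since $B(1)=0$. Independence means only that $\theta[1,1](L)\ne0$. By \eqref{trace}, $\theta_{L,1}^-=\theta[1,1](L)=D_1$, and $D_1\ne0$ by hypothesis. Equivalently, $\rho_1^-$ sends the invariant vector to $D_1\neq0$ and is therefore injective on the one-dimensional module $V_1^-$. This matches the remark after Theorem~\ref{distinguish} that $D_1\ne0$ suffices in genus one.

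The only delicate point is bookkeeping. The hypothesis in Proposition~\ref{odd-distinguish} must be checked at degree $h-1$, not at $r$, and Step 1 supplies this. One should also confirm that $\Phi$ is applied only in degrees at least two, where \eqref{phitrace} is valid. Steps 1–3 together give the claim.
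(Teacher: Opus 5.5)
Your proof is correct and follows the paper's argument: you propagate $D_r\ne0$ upward with the Siegel operator, which is the content of Proposition~\ref{s3}, and then apply Proposition~\ref{odd-distinguish} in degree $h-1\ge r$. You handle degree one through the single odd characteristic, using $\theta[1,1](L)=D_1$, exactly as the paper does, and your explicit bookkeeping of the degrees at which $\Phi$ is applied usefully clarifies the paper's terse proof.
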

\begin{proof}
Use Proposition~\ref{odd-distinguish} and the preceding proposition.
In degree one there is just one odd characteristic.
\end{proof}
In odd rank the odd-characteristic series vanish in all genera, so there
is no analogous nonvanishing assertion for them.

\section{Examples}

\subsection{The classical lattice and the smallest ranks}
For $I_m$ with its standard characteristic vector,
$\theta[c](I_m)=\vartheta[c]^m$, where $\vartheta$ is the classical
theta constant. The odd series vanish. The even powers are linearly
independent for every positive $m\ne4$, whereas for $m=4$ their span has
dimension $a_g$; see \cite{FSM}.

For $m=1$, $E=2\ZZ$, $a=1/2$, and
\[
 K_0=2\ZZ\cup(2\ZZ+1/2),\qquad
 K_1=2\ZZ\cup(2\ZZ+3/2).
\]
They are negatives of one another but neither is a lattice; $R=\frac12\ZZ$.
For $m=2$, the same construction on $I_2$ gives rational determinant-one
lattices with a vector $a=(1/2,1/2)$ of norm $1/2$.
These examples display the distinctions required outside $4\mid m$.

For $m=4$, the two integral neighbors of $I_4$ are both isometric to
$I_4$, and Theorem~\ref{rankfourdim} recovers dimension $a_g$.
For $4\mid m$ and $m>4$, the neighbors of $I_m$ are isometric to
$D_m^+$, whose minimum is $\min(2,m/4)>1$; hence they are not isometric
to $I_m$.

\subsection{Rank twelve}
Let $L=D_{12}^+$, with $s=\tfrac12(1,\ldots,1)$, so that
$L=D_{12}\cup(D_{12}+s)$ and $L_e=D_{12}$.
The vector $w=2e_1$ belongs to $L$ and is characteristic: its pairing
is even on $D_{12}$ and odd on $D_{12}+s$, as are the respective norms.
Thus \eqref{neighbors} gives
\[
 L_1=D_{12}\cup(D_{12}+e_1)=\ZZ^{12},\qquad
 L_2=D_{12}\cup(D_{12}+s+e_1)\cong D_{12}^+.
\]
The last isometry changes the sign of the first coordinate.
The first neighbor has 24 vectors of norm one, whereas the second has
none. With $q=e(\tau)$, the odd degree-one trace is therefore
\[
 \theta_{L,1}^-=\theta^{(1)}(\ZZ^{12})-\theta^{(1)}(D_{12}^+)
 =24q^{1/2}+O(q).
\]
Proposition~\ref{s3} proves nonvanishing in every degree.
This is a rank congruent to four modulo eight, so the two neighbors
are odd; the permutation-module conclusions for ranks divisible by eight do not apply.

\subsection{Rank sixteen}
There is a unique odd unimodular lattice of rank sixteen without vectors
of norm one, usually denoted $(D_8\oplus D_8)^+$; its even neighbors
are $E_8\oplus E_8$ and $D_{16}^+$, see \cite{CS,DBMS}.
Their theta series agree in genera one, two, and three, and differ in
degree four \cite{BFW}. Thus every odd-characteristic series is nonzero
for $g\ge4$, and, according to Proposition \ref{odd-distinguish} these series are linearly independent for $g\ge5$.
Really, using standard facts of  representation   of finite group and the  glueing method, cf.\cite{OPSY},  any  theta series of odd characteristic of $(D_8\oplus D_8)^+$ can be related 
 to a polynomial in the even theta series related to the standard lattice $I_8$,  in fact
 with the characteristic vector \(w=(2e_1,2e_1)\) in the two \(D_8\) blocks, we have
\[
\theta[c](L,\tau;w)
=
2^{-2g}
\sum_{p,q\ {\rm even}}
(-1)^{\langle p,q\rangle+\langle c,p+q\rangle}
\vartheta_p(\tau)^8\vartheta_q(\tau)^8 .
\]
  This  fact implies that the map  $\rho^-$ is injective

In lower degree  they vanish.  In fact , in degree three, we have
\[
V_3^{-}=\mathbf1\oplus\mathbf{27}_a.
\]

 Now,   $\mathbf{27}_a$ does not occur in the  decomposition of the space  of modular forms of weight 8  relative to the group $\Gamma_3(2)$ as established in \cite{DPvG}.   Both constituents therefore map to zero. The Siegel operator then gives individual vanishing in degree two.
In degree one the single odd series vanishes. 

\subsection{Rank twenty: three isometric odd extensions}
We give an explicit construction of the lattice with root system $D_4^5$.
Identify $D_4^*/D_4$ with the additive group of
$\FF_4=\{0,1,\omega,\omega^2\}$, where $\omega^2+\omega+1=0$.
Every nonzero class has minimum norm one, and the discriminant quadratic
form is one modulo $2\ZZ$ on each nonzero class. Triality realizes
multiplication by $\omega$ on this group.

In $\FF_4^5$ take the linear code
\[
 C=\Span_{\FF_4}\{(1,0,1,\omega,\omega^2),
                         (0,1,1,\omega^2,\omega)\}.
\]
It is Hermitian self-orthogonal. Every nonzero word has weight four:
up to scalar its five possibilities come from the second generator or
the first generator plus $t$ times the second, $t\in\FF_4$.
Let $E$ be the inverse image of $C$ in $(D_4^*)^5$. It is even integral,
has determinant $4^5/16^2=4$, and its norm-two vectors are precisely
the roots of $D_4^5$.

Put $c=(1,1,1,0,0)$. It lies in the Hermitian orthogonal code
$C^{\perp_H}$ but not in $C$. Since $C^{\perp_H}/C$ has dimension one,
its three nonzero classes are $c,\omega c,\omega^2c$.
The inverse images
\[
 M_t\longleftrightarrow C+\FF_2tc,\qquad t\in\FF_4^*,
\]
are the three integral unimodular extensions of $E$. Every word in
$C+tc$ has odd weight: explicitly there are ten words of weight three
and six of weight five. Thus each $M_t$ is odd, has minimum two and
root system $D_4^5$, and has even sublattice $E$.
Simultaneous triality in the five factors preserves $C$ and cyclically
permutes these three extensions. Taking any one as $L$ therefore gives
$L\cong L_1\cong L_2$ and, by Theorem~\ref{rankfourdim},
\[
 \dim\Theta_{L,g}^+=a_g\qquad(g\ge1).
\]
This example has no norm-one vector and hence no orthogonal $I_4$ summand.

There is also an automorphism fixing $M_1$ and exchanging the other
two extensions: conjugate all five $\FF_4$ coordinates and interchange
the fourth and fifth coordinates. These operations lift to isometries
of $D_4^5$; the composite fixes both code generators and $c$, and
exchanges $\omega c$ with $\omega^2c$. Proposition~\ref{p5} therefore
shows that all odd-characteristic series of this lattice vanish.
\begin{rem} This construction  generalises at higher rank $m\equiv 4, \, {\rm mod}\,8$ in two different way
 There are  odd unimodular lattices $C$ with root system $D_4^{N-1}$, $N\geq 6$.  
 We can  also take  lattices $L= M\perp C$  with $M$ unimodular  even.
 In all these cases we have    triality.
\end{rem}

\subsection{Rank twenty-four}
Borcherds classified the 156 odd unimodular lattices of rank twenty-four
without norm-one vectors and recorded their even neighbors \cite{Bor}.
There are exactly 34 of these lattices with isometric neighbors; see
Figure~17.1 in \cite{CS}.

 For each of these 34 lattices, all odd-characteristic theta series vanish identically in every degree. Indeed, 
Borcherds’s classification shows that an automorphism of the odd lattice exchanges its two isometric 
even neighbours, and Proposition \ref{p5} applies.

The odd Leech lattice has neighbors the Leech lattice and the Niemeier
lattice with root system $A_1^{24}$. For this ordering, the degree-one
 odd trace is $-48\Delta$, where $\Delta=q+O(q^2)$.
Hence its odd-characteristic series are nonzero and linearly independent
in every degree, by Corollary~\ref{t3}.

More generally, for a Niemeier lattice $M$ with Coxeter number $h_M$
(and $h_M=0$ for the Leech lattice), one has \cite{NT}
\[
 \theta^{(1)}(M)=E_4^3+(24h_M-720)\Delta.
\]
Thus an odd lattice with even neighbors of different Coxeter numbers
has nonzero odd-characteristic series in every degree.
The five pairs of distinct Niemeier root systems with equal Coxeter
numbers are
\[
\begin{array}{c|c|c}
 h&\text{first root system}&\text{second root system}\\\hline
 30&E_8^3&D_{16}E_8\\
 18&A_{17}E_7&D_{10}E_7^2\\
 12&A_{11}D_7E_6&E_6^4\\
 10&A_9^2D_6&D_6^4\\
 6&A_5^4D_4&D_4^6
\end{array}
\]
Their theta series agree through degree three, by \cite{NT}.
Whenever the two neighbors of an odd lattice form one of these pairs,
its odd trace therefore vanishes in these genera. The individual  cases can be checked  as in the rank 16  case in degree two
Thus  also in these cases  we get that i the individual form  vanish. 

In degree three, when  $h=30$ the odd lattice is $E_8\perp(D_8\oplus D_8)^+$, the vanishing is induced by the discussion done  for rank 24.
 We did not compute  the other cases. In  any case, we can  say that  we  do not know whether vanishing of the odd trace implies vanishing of every odd-characteristic theta series in the same degree.

\begin{rem}
We expect that the shift in degree in
Proposition~\ref{odd-distinguish} can be omitted: a nonzero
odd trace should imply linear independence of the
odd-characteristic theta series in the same degree.
A separate question is whether vanishing of the odd trace
forces all its summands to vanish identically.
Neither assertion is established here.
\end{rem}

\section{Global theta-series spaces in ranks divisible by eight}
Fix $m\equiv0\pmod8$ and $g\ge1$. Let
\[
 \Theta=\Span_{\CC}\{\theta^{(g)}(M):M\text{ even unimodular of rank }m\}
\]
and
\[
 \Theta^{\pm}=\Span_{\CC}
 \{\theta_{L,g}^{\pm}:L\text{ odd unimodular of rank }m\}.
\]
The choice of characteristic representative does not affect these spans.
Let $c_0:\Theta\to\CC$ be the constant-coefficient functional.

\begin{prop}\label{q0}
Every pair of even unimodular $2$-neighbors is the pair of even neighbors
of an odd unimodular lattice.
\end{prop}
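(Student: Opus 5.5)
Let $M_1,M_2$ be even unimodular lattices of rank $m$, $8\mid m$, in a common rational quadratic space, with $E=M_1\cap M_2$ of index two in each. I will construct the required odd lattice $L$ as the third index-two extension of $E$ and then check that its even neighbors are exactly $M_1,M_2$.

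First I determine the discriminant group of $E$. Since $[M_1:E]=2$ and $M_1$ is unimodular, $\det E=4$. Moreover $E\subset M_i\subset E^*$. The group $E^*/E$ has order four and contains the two distinct subgroups $M_1/E$ and $M_2/E$ of order two. A cyclic group of order four has only one subgroup of order two, so $E^*/E\cong(\ZZ/2\ZZ)^2$.

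Its three nonzero classes are $x_1+E$, $x_2+E$ and $x_1+x_2+E$, where $x_i\in M_i\setminus E$. I then examine the discriminant form. The classes $x_1,x_2$ have even norm, since $M_1,M_2$ are even. The pairing $(x_1,x_2)$ is not an integer: otherwise $x_2$ would lie in $M_1^*=M_1$, giving $x_2\in M_1\cap M_2=E$. Since $2x_2\in E\subset M_1$, we get $(x_1,x_2)\in\tfrac12+\ZZ$. Hence
\[
(x_1+x_2,x_1+x_2)\equiv 2(x_1,x_2)\equiv1\pmod2 .
\]
Put $L=E\cup(E+x_1+x_2)$. It is closed under addition because $2(x_1+x_2)\in E$. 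It is integral because $(x_1+x_2,E)\subseteq\ZZ$ and its norm is an integer. Its determinant is $4/4=1$, so $L$ is unimodular, and it is odd.

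Next I check that $E$ is the even sublattice $L_e$. Norms on $E$ are even, since $E\subset M_1$. Norms on the coset $E+x_1+x_2$ are all odd, since they are congruent modulo $2$ to $(x_1+x_2)^2$ plus the even quantities $2(x_1+x_2,e)$ and $(e,e)$. Hence $L_e=E$.

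Now Proposition~\ref{classification} (or directly \eqref{neighbors}) says that the three index-two extensions of $E=L_e$ are $L,L_1,L_2$. These are exactly $E$ adjoined to each of the three nonzero classes of $E^*/E$. Since $M_1,M_2$ are even and $L$ is odd, $\{M_1,M_2\}=\{L_1,L_2\}$. Proposition~\ref{q1} gives the same conclusion. The order of the pair can be adjusted by changing $w$ by $2b$ with $b\in L_o$.

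The only delicate step is showing that $E^*/E$ is not cyclic and that $(x_1,x_2)$ is half-integral. This is where $M_1\ne M_2$ and the self-duality of each $M_i$ are used. The rest is bookkeeping with the discriminant form.
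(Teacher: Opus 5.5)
Your proof is correct and follows essentially the same route as the paper: set $E=M_1\cap M_2$, use the self-duality of $M_1$ to show $(x_1,x_2)\in\tfrac12+\ZZ$, and adjoin $x_1+x_2$ to $E$ to obtain an odd unimodular $L$ with $L_e=E$ whose other two index-two extensions are $M_1,M_2$. Your argument that $x_2\in M_1^*=M_1$ is the paper's ``$M_1+M_2$ would be integral'' in different words, and your checks that $E^*/E$ is a Klein four-group and that $L_e=E$ spell out steps the paper leaves implicit.
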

\begin{proof}
Write $C=M_1\cap M_2$ and $M_i=C\cup(C+x_i)$.
Since $2x_i\in C$, the pairing $(x_1,x_2)$ lies in $\tfrac12\ZZ$.
It cannot be integral: otherwise $M_1+M_2$ would be an integral lattice
properly containing the unimodular lattice $M_1$. Hence $x_1+x_2$ has
odd integral norm. The lattice
\[
 L=C\cup(C+x_1+x_2)
\]
is integral and has determinant one, since $\det C=4$ and $[L:C]=2$.
Its even sublattice is $C$, and its even neighbors are $M_1,M_2$.
\end{proof}

\begin{thm}[Kneser]\label{th}
For each fixed positive rank divisible by eight, the graph of isometry
classes of even unimodular lattices, with edges given by $2$-neighbors,
is connected.
\end{thm}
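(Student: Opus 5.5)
The plan is to follow Kneser's original strategy. It has two parts. The first is a reduction, using strong approximation, to lattices that differ only at the prime $2$. The second is an elementary descent by $2$-neighbours.

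First I fix two even unimodular lattices $M,N$ of rank $m$, $8\mid m$. All such lattices lie in one genus: locally at every prime they are unimodular and even of the same rank and discriminant. By Hasse--Minkowski they are also rationally isometric. So after transporting $N$ I may assume $M,N\subset V$ in one rational quadratic space and $M_p\cong N_p$ for all $p$. I would then invoke strong approximation for the spin group of $V$. This is legitimate: $\dim V\ge 8\ge3$ and $V\otimes\mathbb{R}$ is definite, so I approximate away from the finite place $2$ and put no condition at $\infty$. It yields $\sigma\in SO(V)$ with $\sigma(N)_p=M_p$ for all odd $p$. The spinor-norm obstruction must be checked: the local spinor norms of $SO(M_p)$ contain all units for unimodular $M_p$ at odd $p$. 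At $p=2$, $SO(M_2)$ contains reflections in vectors of norm $2$, and possibly also of norm $6$ or other norms $2\times$unit. This should make the spinor genus coincide with the genus, or at least let the remaining discrepancy be absorbed at $2$. I expect this to be the main obstacle, and I would cite Kneser or Cassels here rather than reprove it. After replacing $N$ by $\sigma(N)$, the quotient $M/(M\cap N)$ is a finite abelian $2$-group.

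Second, I induct on $[M:M\cap N]=2^k$. If $k=0$, then $M=N$. Otherwise I choose $x\in N$ with $x\notin M$ and $2x\in M$. Such an $x$ exists because $N/(M\cap N)\cong M/(M\cap N)$ is a nonzero $2$-group. Then $(x,M)\subseteq\tfrac12\ZZ$, so $M_x=\{y\in M:(x,y)\in\ZZ\}$ has index two in $M$. I form the neighbour $M'=M_x+\ZZ x$. It is integral and unimodular, since $x$ has even norm (it lies in $N$) and $2x\in M$. It is even because $(x,x)\in2\ZZ$ and $(x,M_x)\subseteq\ZZ$. If $2x\in 2M$, then $x\in M$ modulo a vector I can adjust. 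So I modify $x$ by an element of $M\cap N$, or choose $x$ of maximal order in the quotient, to ensure $2x\notin 2M$. This is the standard condition for $M'$ to be a genuine $2$-neighbour.

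The key computation is that $M'\cap N\supseteq (M_x\cap N)+\ZZ x$ has index strictly smaller than $2^k$ in $M'$. I would compare volumes: $[M:M_x]=2$, and $M_x\supseteq M\cap N$ because $(x,M\cap N)\subseteq(N,N)\subseteq\ZZ$. Adding $x$ then enlarges $M\cap N$ by a factor of $2$, while $M'$ has the same covolume as $M$. Hence $[M':M'\cap N]\le 2^{k-1}$, and induction gives a chain of $2$-neighbours from $M$ to $N$. Passing to isometry classes then gives connectivity of the graph in the statement. Proposition~\ref{q0} is compatible with this: each step is realised by an odd lattice, although the proof does not need that.
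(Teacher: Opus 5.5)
Your argument is correct, and it is the classical proof behind the paper's citation; the paper gives no proof of its own, only a reference to Kneser and Chenevier--Lannes. The steps match that proof: strong approximation for $\operatorname{Spin}(V)$ with $S=\{2\}$ makes $M$ and $N$ agree at all odd primes, and the spinor-norm obstruction is absorbed exactly as you suspect, because local spinor norms at odd $p$ contain all units and every positive rational is a global spinor norm of the definite space $V$; your descent with $x\in N$ of order two modulo $M\cap N$ then lowers the index to $2^{k-1}$. Two small corrections: strong approximation with $S=\{2\}$ needs $V\otimes\QQ_2$ to be isotropic (automatic since $\dim V\ge5$), not $V\otimes\mathbb{R}$ to be definite; and your worry about $2x\in2M$ is vacuous, since that would force $x\in M$, so $M_x+\ZZ x$ is automatically a genuine $2$-neighbour.
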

We use the classical connectivity theorem in this form; see \cite{Kn,Ch}.

\begin{thm}\label{global}
One has
\[
 \Theta^+=\Theta,\qquad \Theta^-=\ker(c_0|_\Theta).
\]
In particular, $\Theta^-$ has codimension one in $\Theta$.
\end{thm}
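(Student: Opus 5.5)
By \eqref{trace}, for each odd unimodular $L$ of rank $m$ we have $\theta_{L,g}^{\pm}=2^{g-1}(\theta^{(g)}(L_1)\pm\theta^{(g)}(L_2))$, with $L_1,L_2$ even unimodular. Hence both $\Theta^+$ and $\Theta^-$ lie in $\Theta$. Every element of $\Theta^-$ is a combination of differences $\theta^{(g)}(M_1)-\theta^{(g)}(M_2)$. Since each such difference has constant coefficient $1-1=0$, we get $\Theta^-\subseteq\ker(c_0|_\Theta)$. It therefore remains to prove the two reverse inclusions.

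For the reverse inclusions, the main input is Proposition~\ref{q0} together with Kneser's Theorem~\ref{th}. By Proposition~\ref{q0}, every edge $\{M_1,M_2\}$ of the neighbor graph is realized by some odd $L$. Hence both $\theta^{(g)}(M_1)+\theta^{(g)}(M_2)$ and $\theta^{(g)}(M_1)-\theta^{(g)}(M_2)$ lie in $\Theta^+$ and $\Theta^-$ respectively, up to the factor $2^{g-1}$. The choice of $w$ only swaps the ordering, which changes the sign of the difference and does not affect the spans.

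For $\ker c_0$: since the graph is connected, for any two even unimodular $M,M'$ we can choose a path $M=N_0,N_1,\dots,N_r=M'$ of $2$-neighbors. Telescoping gives
\[
\theta^{(g)}(M)-\theta^{(g)}(M')=\sum_{i}\bigl(\theta^{(g)}(N_{i-1})-\theta^{(g)}(N_i)\bigr)\in\Theta^-.
\]
Now let $f=\sum_M\lambda_M\theta^{(g)}(M)$ lie in the kernel. Its constant coefficient is $\sum\lambda_M$, so $\sum\lambda_M=0$. Fixing a base lattice $M_0$, we can write $f=\sum\lambda_M(\theta^{(g)}(M)-\theta^{(g)}(M_0))$, which lies in $\Theta^-$.

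For $\Theta^+=\Theta$: fix any edge $\{M_0,M_0'\}$. This requires the graph to have an edge, that is, at least one odd unimodular lattice of rank $m$, and $I_m$ supplies one. From the sum and the difference along that edge we obtain $\theta^{(g)}(M_0)\in\Theta^+ + \Theta^-$, but we need it in $\Theta^+$ itself. Instead, the sum $\theta^{(g)}(N_{i-1})+\theta^{(g)}(N_i)$ along a path gives, by alternating signs, $\theta^{(g)}(M)\pm\theta^{(g)}(M_0)\in\Theta^+$, with sign depending on the path length parity. This is the delicate point. Any edge already gives $S=\theta^{(g)}(M_0)+\theta^{(g)}(M_0')\in\Theta^+$. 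Separately, the differences lie in $\Theta^-\subseteq\Theta$, not in $\Theta^+$, so this route fails as it stands. The cleanest fix is to use the fact that for a fixed odd $L$ the even-characteristic span and odd-characteristic span combine. Concretely, each single $\theta^{(g)}(L_1)$ equals $2^{-g}\sum_{\varepsilon,\delta}\theta[\varepsilon,\delta](L)$, which is not itself an element of $\Theta^+$ as defined, so this does not directly work either.

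The approach I would actually try is a parity argument on the graph. If the neighbor graph contains an odd cycle, then alternating sums around that cycle isolate $2\theta^{(g)}(M_0)\in\Theta^+$, and connectivity then puts every $\theta^{(g)}(M)$ in $\Theta^+$. An odd cycle comes from a triangle: for $m\ge 8$ one can take three mutually neighboring even lattices, or use a lattice that is its own $2$-neighbor, namely an odd $L$ with isometric neighbors $L_1\cong L_2$. In the latter case $2^{g-1}\cdot 2\theta^{(g)}(L_1)$ directly lies in $\Theta^+$. Rank $8$, with the single lattice $E_8$ and $L=I_8$, is exactly of this type. In general I would look for a self-loop or triangle via Kneser neighbors of a fixed lattice such as $D_m^+$. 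Producing this odd cycle or self-loop in every rank divisible by eight is the step I expect to be the main obstacle. Finally, the codimension-one claim follows because $c_0$ is nonzero on $\Theta$, since each $\theta^{(g)}(M)$ has constant coefficient $1$.
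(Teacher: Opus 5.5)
Your arguments for $\Theta^{\pm}\subseteq\Theta$, for $\Theta^-=\ker(c_0|_\Theta)$ (telescoping along Kneser chains, with each edge realized by an odd lattice via Proposition~\ref{q0}), and for the codimension claim are correct and match the paper's. The gap is the inclusion $\Theta\subseteq\Theta^+$, which you do not prove. You correctly reduce it to finding a self-loop (an odd $L$ with $L_1\cong L_2$) or an odd cycle in the neighbor graph. However, you leave the existence of such a configuration in general rank $m\equiv0\pmod 8$ as an open obstacle. Your alternative, a triangle of mutually neighboring even lattices for all $m\ge8$, is only asserted.

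The missing observation is Example~\ref{ex}, and it works in every rank, not only for $m=8$, where uniqueness of $E_8$ makes it automatic. For $L=\ZZ^m$ with $w=(1,\ldots,1)^t$ one has $E=D_m$ and may take $v_0=e_1$. Since $e_1+\tfrac12(1,\ldots,1)\equiv\tfrac12(-1,1,\ldots,1)\pmod{D_m}$, the sign change of the first coordinate preserves $D_m$ and carries $L_1=D_m\cup(D_m+\tfrac12(1,\ldots,1))$ onto $L_2$. Hence $L_1\cong L_2\cong D_m^+$, and \eqref{trace} gives $\theta_{\ZZ^m,g}^+=2^g\theta^{(g)}(D_m^+)\in\Theta^+$.

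After that no parity argument is needed. Take a Kneser chain $D_m^+=M_0,M_1,\ldots,M_r\cong M$. Each edge gives $\theta^{(g)}(M_i)+\theta^{(g)}(M_{i+1})\in\Theta^+$, so $\theta^{(g)}(M_{i+1})\in\Theta^+$ follows from $\theta^{(g)}(M_i)\in\Theta^+$ by induction. This is exactly how the paper concludes.
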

\begin{proof}
The trace identity gives $\Theta^+\subseteq\Theta$ and
$\Theta^-\subseteq\ker(c_0|_\Theta)$.
By Example~\ref{ex}, the two even neighbors of $\ZZ^m$ are isometric
to $D_m^+$. Hence $\theta_{\ZZ^m,g}^+=2^g\theta^{(g)}(D_m^+)$, so
$\theta^{(g)}(D_m^+)\in\Theta^+$.
For any even unimodular $M$, Theorem~\ref{th} gives a neighbor chain
\[
 D_m^+=M_0,M_1,\ldots,M_r\cong M.
\]
At the level of isometry classes, each edge can be represented by two
embedded neighbors. Proposition~\ref{q0} and the trace identity imply
$\theta^{(g)}(M_i)+\theta^{(g)}(M_{i+1})\in\Theta^+$.
Induction along the chain shows that $\theta^{(g)}(M)\in\Theta^+$.

For the second equality, write
$f=\sum_j a_j\theta^{(g)}(M_j)\in\ker c_0$.
Every lattice theta series has constant coefficient one, so $\sum_j a_j=0$.
Fix an even unimodular lattice $M_0$. Then
\[
 f=\sum_j a_j\bigl(\theta^{(g)}(M_j)-\theta^{(g)}(M_0)\bigr).
\]
Connect $M_j$ to $M_0$ by a neighbor chain. Each difference along an
edge belongs to $\Theta^-$, again by Proposition~\ref{q0} and
\eqref{trace}. Telescoping proves that every displayed difference lies
in $\Theta^-$, hence $f\in\Theta^-$.
Finally, $c_0$ is nonzero, which gives the codimension assertion.
\end{proof}

He  takes full responsibility for its
mathematical content.

\end{document}